\LetLtxMacro{\oldequiv}{\equiv}
\pgfplotsset{compat=newest}
\definecolor{linkcolor}{rgb}{0,0,0.5}
\newcommand{\DELETE}[1]{}
\declaretheoremstyle[headfont=\normalfont\bfseries,bodyfont=\itshape]{cas-thm}
\declaretheoremstyle[headfont=\normalfont\bfseries]{cas-def}
\declaretheorem[parent=section,style=cas-thm]{theorem}
\declaretheorem[sibling=theorem,style=cas-thm]{lemma}
\declaretheorem[sibling=theorem,style=cas-def]{definition}
\numberwithin{equation}{section}
\def\condlabel#1#2{\begingroup
  #2%
  \protected@edef\cref@currentlabel{[cond][][]#2}%
  \phantomsection\label[cond]{#1}%
  \endgroup
}
\newcommand{\invo}{^{-\mathrm{o}}}
\newcommand{\refl}[1]{\mathop{{\mathrm{refl}}_{#1}}}
\newcommand{\reflo}[1]{\mathop{{\mathrm{refl}}_{#1}}} 
\newcommand{\refloi}[1]{\mathop{\mathrm{refl}\invo_{#1}}}
\newcommand{\trp}[1]{\mathop{\mathrm{trp}_{#1}}}
\newcommand{\zpos}{\mathop{\mathrm{pos}}}
\newcommand{\zneg}{\mathop{\mathrm{neg}}}
\newcommand{\zzero}{\mathop{\mathrm{zero}}}
\newcommand{\ap}[1]{\mathop{{\mathrm{ap}}_{#1}}}
\newcommand{\apd}[1]{\mathop{{\mathrm{apd}}_{#1}}}
\newcommand{\apc}{\mathop{\mathrm{ap}_{\ct}}}
\newcommand{\id}{\mathord{\mathrm{id}}}
\newcommand{\pt}{{\mathord{\mathrm{pt}}}}
\newcommand{\Prop}{\mathord{\mathrm{Prop}}}
\newcommand*{\texthyphen}{\textup{\kern.5pt-\kern-.5pt}}
\newcommand{\CircleAlg}{\mathord{\mathbb{S}\texthyphen\mathrm{Alg}}}
\newcommand{\CircleFibAlg}{\mathord{\mathbb{S}\texthyphen\mathrm{Fib}\texthyphen\mathrm{Alg}}}
\newcommand{\ZZ}{\mathbb{Z}}
\newcommand{\NN}{\mathbb{N}}
\newcommand{\ct}{*}
\newcommand{\cto}{\mathbin{*_{\mathrm{o}}}}
\newcommand{\cp}[1]{\mathrm{cp}_{#1}}
\newcommand*{\dblslash}{\mathbin{/\kern-3pt/}}
\newcommand*{\Shom}{\mathop{\mathbb{S}\texthyphen\mathrm{hom}}}
\newcommand*{\Ssect}{\mathop{\mathbb{S}\texthyphen\mathrm{sect}}}
\newcommand{\cE}{\mathcal{E}}
\newcommand{\sE}{\mathscr{E}}
\renewcommand{\equiv}{\simeq}
\newcommand{\jdeq}{\oldequiv}
\newcommand{\defeq}{\vcentcolon\jdeq}
\DeclarePairedDelimiter\Trunc{\lVert}{\rVert}
\DeclarePairedDelimiter\trunc{\lvert}{\rvert} 
\DeclarePairedDelimiterX\setof[2]\lbrace\rbrace{#1 \mid #2}
\newcommand*{\UniMath}{\emph{UniMath}}
\def\mathrule#1#2{%
  \@tempdimb=\dimexpr#2
  \@tempdimc=0.5\@tempdimb
  \@tempdima=\dimexpr#1-\@tempdimc
  \hbox to #1{%
    \pdfliteral{
      q []0 d
      1 J 
      \strip@pt\@tempdimb\space w \strip@pt\@tempdimc\space 0 m
      \strip@pt\@tempdima\space 0 l S Q }}}
\newcommand*{\blank}{\mathord{\mathchoice%
    {\mathrule{.5em}{.5pt}}
    {\mathrule{.5em}{.5pt}}
    {\mathrule{.4em}{.4pt}}
    {\mathrule{.3em}{.3pt}}
  }}
\newcommand{\ev}{\mathrm{ev}}
\newcommand{\iscontr}{\mathrm{isContr}}
\newcommand{\isEq}{\mathrm{isEquiv}}
\DeclareMathOperator\pr{pr}
\newcommand{\TorZ}{{\mathrm{T}\ZZ}} 
\newcommand{\UU}{\mathcal{U}}
\newcommand{\UUp}{\UU_*}
\newcommand{\Sc}{{\mathbb{S}^1}}
\newcommand{\Zloop}{{\circlearrowleft}} 
\newcommand{\Sloop}{{\circlearrowleft}}
\newcommand{\ptoe}[1]{\tilde {#1}}
\newcommand{\zet}{\mathbb{Z}}
\newcommand{\ind}{\mathop{\mathrm{ind}}}
\newcommand{\nolinkcoqident}[1]{\nolinkurl{#1}} 
\newcommand{\coqident}{\begingroup\@makeother\#\@coqident}
\newcommand{\@coqident}[3][]{
  \ifthenelse{\isempty{#2}}%
  {\nolinkcoqident{#3}}
  {\ifthenelse{\isempty{#1}}%
  {\href{\coqdocurl{#2}{#3}}{\nolinkcoqident{#3}}}
  {\href{\coqdocurl{#2}{#3}}{\nolinkcoqident{#1}}}}
\endgroup}
\newcommand{\coqfile}[2]{%
  \ifthenelse{\isempty{#1}}%
  {\href{\coqdocbaseurl #2.html}{#2.v}}%
  {\href{\coqdocbaseurl #1.#2.html}{#2.v}}} 
\begin{document}

\title{Construction of the circle in \UniMath}
{
    \author{Marc Bezem}
    \address{Department of Informatics, University of Bergen}
    \email{\href{mailto:marc.bezem@uib.no}{marc.bezem@uib.no}}
    \urladdr{\url{https://www.ae-info.org/ae/Member/Bezem\_Marc}}
}
{
    \author{Ulrik Buchholtz}
    \address{Department of Mathematics, Technical University of Darmstadt}
    \email{\href{mailto:ulrikbuchholtz@gmail.com}{ulrikbuchholtz@gmail.com}}
    \urladdr{\url{https://www2.mathematik.tu-darmstadt.de/~buchholtz/}}
}
{
    \author{Daniel R. Grayson}
    \address{Department of Mathematics, University of Illinois at Urbana-Champaign}
    \curraddr{2409 S.\ Vine St., Urbana, Illinois 61801, USA}
    \email{\href{mailto:danielrichardgrayson@gmail.com}%
      {danielrichardgrayson@gmail.com}}
    \urladdr{\url{http://dangrayson.com/}}
}
{
    \author{Michael Shulman}
    \address{Department of Mathematics, University of San Diego}
    \email{\href{mailto:shulman@sandiego.edu}{shulman@sandiego.edu}}
    \urladdr{\url{http://www.sandiego.edu/~shulman/}}
}

\date{\today}

\begin{abstract}
We show that the type $\TorZ$ of $\ZZ$-torsors has the dependent universal property of the circle,
which characterizes it up to a unique homotopy equivalence.
The construction uses Voevodsky's Univalence Axiom and propositional truncation,
yielding a stand-alone construction of the
circle not using higher inductive types.
\end{abstract}

\maketitle
\markleft{BEZEM BUCHHOLTZ GRAYSON SHULMAN}
\markright{CONSTRUCTION OF THE CIRCLE}
\tableofcontents

\section{Introduction}

In the standard set-theoretic foundations of mathematics, the sets have elements, which are themselves sets.  A set has no additional structure
connecting its various elements to each other.  An equation between two sets is always a proposition, and equality between two elements of a set
is independent of the ambient set.  By contrast, in homotopy type theory and in Voevodsky's univalent foundations, the \emph{types} are the
fundamental objects, serving to classify the objects of mathematics.  They have elements.  Equations are used only to compare two elements of
the same type, and the equations are not always propositions.  Types thus behave much like spaces, viewed from the point of view of homotopy
theory, due to the promotion of isomorphisms between objects of the same type to equalities (true equations), together with the intuition that
an equality between two objects of the same type is like a path between two points in the same space.

\emph{Synthetic homotopy theory} is the study of the homotopy theoretic properties of types.  It is a fruitful one, because it turns out that many
of the most basic results of standard homotopy theory have true analogues for types.  This is true even though the framework is based purely on
logical principles, rather than implemented like traditional homotopy theory in terms of topological spaces and continuous maps or combinatorial
structures such as simplicial sets and fibrant replacement.

In standard homotopy theory it is well known that the \emph{classifying space} of the group $\ZZ$ of integers is
homotopy equivalent to the circle.  We aim to reproduce that result in synthetic homotopy theory in a way that doesn't presuppose the existence of a circle.

The traditional algebraic notion of $\ZZ$-torsor yields a category whose objects are the $\ZZ$-torsors.
Moreover, this category is a groupoid (all arrows are isomorphisms), and
one specific object---namely $\ZZ$ itself---is the trivial $\ZZ$-torsor,
whose automorphism group is $\ZZ$,
and to which every other object is merely isomorphic.
In univalent type theory, this category is thus
completely captured by the type $\TorZ$ of all $\ZZ$-torsors, which
is a connected pointed $1$-type whose fundamental group is $\ZZ$;
we may call it the \emph{classifying type} of $\ZZ$.
In this paper we show that $\TorZ$ behaves the way a circle ought to behave,
by establishing that maps from it to other types (or families of types) are freely determined
by the destinations of the base point and the canonical loop at the base point (corresponding to the element $1$ of $\ZZ$).
The proof is constructive, in that it does not appeal to the axiom of choice or the law of the excluded middle.

There are various types equivalent to the type of $\ZZ$-torsors, and thus they also provide constructions of circles: all one needs is a
connected pointed type whose automorphism groups are isomorphic to $\ZZ$.  For example, if one arising from geometry is desired, one may
consider the type consisting of all frieze patterns in a Euclidean plane\footnote{%
  We say \emph{a} Euclidean plane instead of \emph{the} Euclidean
  plane to indicate that the type is actually the type of pairs consisting of a Euclidean plane and such a frieze pattern in it.}
formed from a
linear collection of evenly spaced copies of the letter $F$.
\[
  \cdots FFFFFFFFFFFFFFFFFFFFFFFFFFFFFFFF \cdots
\]

We have formalized the result in \UniMath{}, a name which refers both to Voevodsky's (univalent) foundation of mathematics based on a formal
type theoretic language and to a particular repository \cite{UniMath} of formalized proofs, initiated by him, encoded in the language of the
proof assistant \emph{Coq}.
See \cite{VV-UniMath-1} for an overview by Voevodsky of it.

The other standard way to construct a circle in type theory uses \emph{higher inductive types}, where one posits a new type $\Sc$, an element
$\pt : \Sc$, a path $\Sloop : \pt = \pt$, an induction principle (for defining functions from it), and nothing more.
Adding higher inductive types to the system would give another construction of the circle, equivalent to the one described above.
Voevodsky chose not to include higher inductive types, nor inductive types,\footnote{\UniMath{} accepts just a few specific types
  and type formers that can be introduced as inductive types, namely: the finite types of cardinality at most 2, the natural numbers, binary
  coproducts, sums of families of types, and identity types.},
in \UniMath{} for two chief reasons.
Firstly, it was (and still is) not clear how to specify precisely what a definition of a higher inductive type consists of: see
\cite{HoTT-Agda-HIT,1705.07088} for two approaches.
Secondly, Voevodsky planned to prove consistency of \UniMath{} in a series of papers\footnote{Early work \cite{1211.2851} with
  Kapulkin and Lumsdaine to establish consistency left him unsatisfied, so he embarked on his own series of papers, to be realized with more details
  exposed.} and realized that adding inductive types and higher inductive types to the system would dramatically increase the length of the
series and the burden of writing them; as it was, he died before completing the project.
At his death the series was well underway but far from complete: \cite{103,109,VV-relmonad,112,39,MR3607209,MR3607210}.
His approach in the series is to interpret each of the basic constructions of the formal system as a traditional mathematical construction: a
context is interpreted as a fibrant simplicial set, a type in a context is interpreted as a fibration of fibrant simplicial sets, an element of
a type in a context is interpreted as a section of such a fibration, and so on.

More recent work \cite{initiality-formalization,initiality}, unpublished but formalized, has established the initiality principle for the type
theory used in \UniMath, which Voevodsky had emphasized as a crucial unproven step.  That, together with the simplicial set model described in
\cite{1211.2851}, may arguably be regarded as establishing the consistency result that Voevodsky sought.

We turn now to the content of this paper.

As a prerequisite we require, of the reader, a working knowledge of homotopy type theory
as described in, for example, the first four chapters of \cite{hottbook}.
In the next \cref{sec:statement}, more precisely in \cref{eq:circle-induction},
we give a precise formulation of the main result.

Preparing for the proof of the main result,
we give in \cref{sec:auxiliaries} some auxiliary results
that are not in the first four chapters of \cite{hottbook}.
The full proof of the main result can be found in \cref{sec:ZTorsors},
more precisely in \cref{sec:TorZ-induction}.
In \cref{sec:topos} we discuss the interpretation of the main result
in higher toposes, before we conclude in \cref{sec:conclusion}.

\section{Precise formulation and discussion}
\label{sec:statement}

\subsection{Preview of the type of \texorpdfstring{$\ZZ$}{Z}-torsors}
\label{sec:preview-Z-tors}

Recall that for a group $G$, a \emph{$G$-torsor} consists of a nonempty set $X$ and
a left action of $G$ on it that is free and transitive.
This means that for all $x,y \in X$ there is a unique $g \in G$ such
that $g \cdot x = y$.
Any element $x_0 \in X$ yields a bijection $e: G \to X$ given by $g \mapsto g \cdot x_0$.
If the operation in the group $G$ is written additively, we'll echo that by writing the torsor operation additively: $g + x$.

In the case where $G$ is $\ZZ$, then since $\ZZ$ is a free group with one generator, an action of $\ZZ$ on a set $X$ is completely
determined by the action of the integer $1$---it is a bijection $X \xrightarrow{\cong} X$ which we denote by $f$.
Fixing $x_0\in X$, the corresponding bijection $e : \ZZ \to X$ then satisfies
  $e(s(n)) = (1+n) + x_0 = 1 + (n + x_0) = f(e(n))$ for all
integers $n$.
So the following diagram commutes.
\[
\begin{tikzpicture} 
   \matrix (m)
   [matrix of math nodes, row sep=2em, column sep=3em, ampersand replacement=\&]
    {\zet \& \zet \\ X \& X \\ };
\draw[->] (m-1-1) -- (m-1-2) node[midway,below] {$s$} node[midway,above] {$\cong$};
\draw[->] (m-1-1) -- (m-2-1) node[midway,left] {$e$} node[midway,right] {$\cong$};
\draw[->] (m-1-2) -- (m-2-2) node[midway,left] {$e$} node[midway,right] {$\cong$};
\draw[->] (m-2-1) -- (m-2-2) node[midway,below] {$f$} ;
\end{tikzpicture}
\]
Conversely, we get a unique $\ZZ$-torsor from any pair $(X,f)$, where $X$ is a nonempty set and $f : X \to X$ is a function, such that there merely
exists a bijection $e$ that makes the diagram above commute, by transporting the structure of $\zet$ as a trivial $\ZZ$-torsor along $e$, thereby
expressing $e$ as an isomorphism of $\ZZ$-torsors.  Independence of the structure on $X$ from the choice of $e$ is established by observing that
$n + x_0 = f^n(x_0)$ for all $n \in \ZZ$.
Alternatively, one could use the existence of $e$ to show that $f$ is a bijection and $X$ is nonempty,
define an action of $\ZZ$ on $X$ by setting $n + x := f^n(x)$ for any $n \in \ZZ$, and then use the existence of $e$ again to show that the
action is free and transitive.

With the above considerations in mind, and recalling that, in the presence of univalence, isomorphisms correspond to identities,
we consider pairs $(X,f)$ of type $\sum_{X:\UU}(X\to X)$
and introduce \emph{the pointed type of $\ZZ$-torsors} by adopting the following definitions
(cf.~\cref{def:TorZ}).
\begin{align*}
  \TorZ &\defeq \sum_{(X,f)}\Trunc{(\zet,s)=(X,f)}  \\
  \pt & \defeq ((\zet,s),\trunc{\refl{(\zet,s)}}) : \TorZ
\end{align*}

{We claim}
that the type $\pt =_\TorZ \pt$ is equivalent to $\zet$.
To see that,
begin by observing that every function $f:\zet\to\zet$ commuting with $s$
is propositionally equal to $s^{f(0)}$.  The first projection
acting on $p : \pt =_\TorZ \pt$ gives a path $\pr_1(p) : \zet=\zet$
such that the corresponding transport function $\pr_1(p)_* : \zet\to\zet$
is a bijection commuting with $s$. We evaluate this bijection {at} $0$.
Let $\ev_0(p) \defeq \pr_1(p)_*(0)$ for all $p : \pt =_\TorZ \pt$,
then $\ev_0 : (\pt =_\TorZ \pt) \to \zet$.  Combining these observations {and}
using the univalence axiom, one proves that $\ev_0$ is an equivalence.

Since $\zet$ is a set, it follows that the underlying set of the fundamental group of $(\TorZ,\pt$) is equivalent to $\zet$,
and with a bit more work, that the fundamental group of $(\TorZ,\pt)$ is isomorphic to $\ZZ$.
However, we do not need this fact here.

We define $1 \defeq s(0)$.
The preimage of $1$ under the equivalence $\ev_0$ is a
natural generating path of $\pt =_\TorZ \pt$, so we define
${\Zloop}\defeq \ev_0^{-1}(1)$, which we call the \emph{loop}
of $\TorZ$. Alternatively, we could have obtained $\Zloop$
directly from $s: \zet\equiv\zet$ by applying the univalence axiom
(with some easy add-ons, e.g., proving that $s$ commutes with itself).

\subsection{The type of circles}
\label{sec:circles}

In order to explain our main result,
that $\TorZ$ behaves like the circle understood as a higher inductive type
with constructors $\pt : \TorZ$ and $\Zloop : pt = \pt$,
we introduce, following \cite{sojakova:hits-hias}, the type of \emph{circle algebras},
consisting of a type together with a point and a loop at that point.
\[
  \CircleAlg \defeq \sum_{C:\UU}~\sum_{c:C}~ c = c.
\]
The type of \emph{fibered circle algebras} over a circle algebra
$\mathcal C \jdeq (C,c,s)$ is defined to be the dependent version thereof.
\[
  \CircleFibAlg(\mathcal C) \defeq
  \sum_{A : C \to \UU}~\sum_{a:A(c)}~ a=^A_s a.
\]
As explained and proved in \cite[Thm.~50]{sojakova:hits-hias},
given a circle algebra $\mathcal C \jdeq (C,c,s)$,
there are various equivalent ways to state that it behaves like the circle,
including\footnote{%
  The dependent universal property is not discussed in~%
  \cite{sojakova:hits-hias}, however, it is easily seen
  to be equivalent to the induction principle.}:
\begin{description}
\item[Homotopy initiality]
  The type of algebra homomorphisms
  from $\mathcal C$ to any other circle algebra $\mathcal A\jdeq(A,a,p)$
  is contractible, where this type of algebra homomorphisms is
  \begin{equation}\label{eq:circle-hom}
    \Shom(\mathcal C,\mathcal A) \defeq
    \sum_{f : C \to A}~\sum_{r:f(c)=a}~ \ap{f}(s)=^{\tilde A}_r p,
  \end{equation}
  where $\tilde A(y) \defeq (y=y)$ for $y:A$.
  We recall the basics of paths over a path, and the accompanying
  operations, in \cref{sec:pathovers} below.
  (The type $\Shom(\mathcal C,\mathcal A)$ is equivalent to the type
  \[
    \sum_{f : C \to A} (f(c),\ap{f}(s)) = (a,p),
  \]
  where the pairs are regarded as elements of $\sum_{y:A} \tilde A(y)$.)
\item[Universal property]
  For any type $A:\UU$, the map that evaluates a function $f : C \to A$
  at $c$ and $s$,
  \begin{equation}\label{eq:eval}
    (C \to A) \to \sum_{a : A} a=a,
    \qquad
    f \mapsto (f(c), \ap{f}(s)),
  \end{equation}
  is an equivalence.
\item[Induction principle]
  Any fibered circle algebra $\mathcal A\jdeq (A,a,p)$
  over $\mathcal C$ has an algebra section,
  where the type of algebra sections is
  \begin{equation}\label{eq:circle-sect}
    \Ssect(\mathcal A) \defeq
    \sum_{f: \prod_{z:C} A(z)}~
    \sum_{r: f(c) = a}~
    \apd{f}(s) =^{\tilde A}_r p,
  \end{equation}
  where $\tilde A(y) \defeq (y =^A_s y)$ for $y:A(c)$.
  (Here $\apd{}$ is as defined in \cite[2.3]{hottbook};
  the type $\Ssect(\mathcal A)$ is equivalent to the type
  \[
    \sum_{f: \prod_{z:C} A(z)} (f(c),\apd{f}(s)) = (a,p),
  \]
  where the pairs are regarded as elements of $\sum_{y:A(c)}\tilde A(y)$.)
\item[Dependent universal property]
  For any type family $A : C \to \UU$, the map that evaluates a function
  $f : \prod_{z:C} A(z)$ at $c$ and $s$,
  \begin{equation}\label{eq:dep-eval}
    \biggl(\prod_{z:C} A(z)\biggr) \to \sum_{a:A(c)} a=^A_s a,
    \qquad
    f \mapsto (f(c), \apd{f}(s)),
  \end{equation}
  has a section.
\end{description}
From the equivalence of the types encoding the four principles above,
it also follows that the type of induction terms
for $\mathcal C$,
\begin{equation}
  \label{eq:circle-induction-gen}
  \prod_{A: C\to\UU}~
  \prod_{a: A(c)}~
  \prod_{p: a=^A_s a}~
  \Ssect(A,a,p),
\end{equation}
which is equivalent to the type of sections of \eqref{eq:dep-eval}
for all families $A$, is a proposition.

The type of circles, then, is the subtype of $\CircleAlg$
corresponding to any of these definitions.
As in~\cite[Sec.~9.8]{hottbook},
we can prove a structure identity principle for circle algebras and circles,
viz., for circle algebras $\mathcal C\jdeq(C,c,s)$ and
$\mathcal C'\jdeq(C',c',s')$, the canonical function
\[
  (\mathcal C =_{\CircleAlg} \mathcal C')
  \to
  \sum_{f : C \to C'}~\sum_{r:f(c)=c'}~(\ap{f}(s)=_r^{\tilde C'} s')
  \times \isEq(f),
\]
where $\tilde C'(y) \defeq (y=y)$ for $y:C'$,
is an equivalence.
By homotopy initiality for circles,
it then follows straight-forwardly that the type of circles
is a proposition, i.e.,
independently of whether there are any circles,
any two circles are uniquely identifiable.

\subsection{\texorpdfstring{$\TorZ$}{TZ} is a circle}
\label{sec:TorZ-circle}

With this in hand, we can outline our route to proving
that the circle algebra $(\TorZ,\pt,\Zloop)$ is a circle.

As a warm-up, we first prove in \cref{sec:TorZ-recursion}
the \emph{recursion principle},
which states that, given a type $A$, an element $a$ of $A$ and a
path $p:a=_A a$, one can construct a function $f:\TorZ\to A$
with a path $r:f(\pt)=a$ such that $\ap{f}(\Zloop)=^{\tilde A}_r p$,
or, equivalently, $\ap{f}(\Zloop) = r \ct (p \ct r^{-1})$.
This construction was formalised by Grayson
in 2014, see \cite[\href{https://github.com/UniMath/UniMath/blob/master/UniMath/SyntheticHomotopyTheory/Circle.v}{Circle.v}]{UniMath}.\footnote{Our construction here uses the
same basic idea but manages the computations involved in establishing the
induction principle differently.}
This however only proves \emph{weak} initiality, or equivalently,
only gives a \emph{section} of the evaluation map in \eqref{eq:eval}.

In \cref{sec:TorZ-induction} we then prove the induction principle,
in which $A$ is not a type but a type family over $\TorZ$.
On the basis of Grayson's construction of the recursion principle,
Shulman sketched an approach to the induction principle, see \cite{circleind-Mike}.
Independently of this, but also departing from Grayson's proof of the
recursion principle, Buchholtz and Bezem found the construction
presented in this paper, which has subsequently been formalized
by Grayson \cite[\href{https://github.com/UniMath/UniMath/blob/master/UniMath/SyntheticHomotopyTheory/Circle2.v}{Circle2.v, Theorem circle\_induction}]{UniMath} in \UniMath.

Spelling out the induction principle, we have to construct a
function of the following type:
\begin{equation}
  \label{eq:circle-induction}
  \thinmuskip=10mu              
  \prod_{A: \TorZ\to\UU}
  \prod_{a: A(\pt)}
  \prod_{p: a=^A_{\Zloop} a}
  \sum_{f: \prod_{z:\TorZ} A(z)}
  \sum_{r: f(\pt)= a}
  \apd{f}(\Zloop) =^{\tilde A}_r p
\end{equation}
(Recall that $\tilde A(y) \jdeq (y =^A_\Zloop y)$ for $y : A(\pt)$.)

\begin{figure}
  \centering
  \begin{tikzpicture}
    \foreach \x in {0,2,4}
    { \begin{scope}[shift={(\x,0)}]
        \foreach \y in {0,1,3}
        { \begin{scope}[shift={(0,\y)}]
            \draw (0,0) .. controls ++(170:-.3) and ++(210: .4) .. (1,0)
                        .. controls ++(210:-.4) and ++(170: .3) .. (2,0);
          \end{scope} }
        \node[fill,circle,inner sep=1pt,label=below:$\pt$] at (1,0) {};
        \node[fill,circle,inner sep=1pt,label=below left:$a$] at (1,1.6) {};
        \node[fill,circle,inner sep=1pt] at (1,2.5) {};
        \draw (1,1) -- (1,3);
        \node at (1,3.5) {$A(\pt)$};
        \draw[dashed] (0,2.4) .. controls ++(150:-.5) and ++(190: .3) .. (1,2.5)
                              .. controls ++(190:-.3) and ++(150: .5) .. (2,2.4);
        \draw[thick,dashed] (1,2.5) -- (1,1.6);
        \node at (1.2,2) {$r$};
      \end{scope} }
    \foreach \x in {0,2}
    { \begin{scope}[shift={(\x,0)}]
        \draw (1,1.6) .. controls ++(140:-.4) and ++(190:.3) .. (2,1.8)
                      .. controls ++(190:-.3) and ++(140:.4) .. (3,1.6);
        \node at (2,1.5) {$p$};
        \node at (2,-.3) {$\Zloop$};
        \node at (2.2,2.65) {$\scriptstyle \apd{f}(\Zloop)$};
      \end{scope} }
    \foreach \y in {0,2} {
      \node at (-.5,\y) {$\cdots$};
      \node at (6.5,\y) {$\cdots$};
    }
    \node (T) at (-1.5,0) {$\TorZ$};
    \node (A) at (-1.5,2) {$A$};
    \node (f) at (0.3,2.6) {$f$};
  \end{tikzpicture}
  \caption{A visualization of circle induction principle}
  \label{fig:circle-induction}
\end{figure}
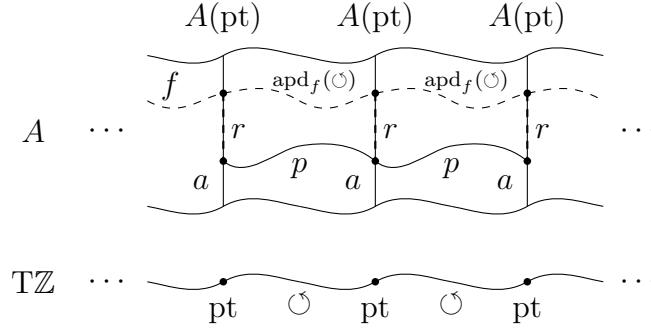

The type in \cref{eq:circle-induction} is illustrated in \cref{fig:circle-induction}.
We think of $\TorZ$ as a circle (and prove that it is),
but we illustrate it (and objects depending on it) as periodically recurring,
in order to make clearer diagrams.
We draw the type family $A$ as a periodic fibration over this circle,
and the goal is then to produce from the point $a$ and the path $p$ over $\Sloop$,
the section $f$, the path $r$, and a final element inhabiting
the type $\apd{f}(\Zloop) =^{\tilde A}_r p$,
which corresponds to filling the inside of the curvilinear quadrilateral in \cref{fig:circle-induction}.
(This will be made precise in course of the argument below,
in terms of compositions of paths over paths.)

Our construction uses the propositional truncation operation $\Trunc{\blank}$,
as in \cite[6.9]{hottbook},
but we do not require that the dependent eliminator
witnessing the induction principle of $\Trunc{\blank}$
computes judgmentally on the point constructor $\trunc{\blank}: X \to \Trunc X$.
In the \UniMath{} formalization, propositional truncation is constructed as
(the propositional resizing of)
\begin{equation}\label{eq:trunc-impred}
  \Trunc X \defeq \prod_{P:\Prop}((X \to P) \to P),
\end{equation}
where $\Prop$ is the type of all propositions (in a chosen universe).
Using this construction, the corresponding non-dependent eliminator
will compute judgmentally on the point constructor,
but the dependent eliminator will not.
As we explain below, this has the consequence that
in \UniMath{}, the non-dependent circle eliminator that we construct for $\TorZ$
will then compute judgmentally on $\pt$,
but the dependent one will not.

If the underlying type theory has propositional truncation with a dependent eliminator
that computes judgmentally on the point constructor, as in \cite[6.9]{hottbook},
then our construction for the induction principle in \cref{eq:circle-induction} simplifies.
Its application to $A,a,p$ of appropriate types
is a triple $(f,r,q)$ where $f$ is a section of $A$ and $r : f(\pt)= a$.
In this case, $f(\pt)\jdeq a$, but in general we won't have $r \jdeq \refl{a}$.
However, the type $\apd{f}(\Zloop) =^{\tilde A}_{\bar r} p$ makes sense
for all $\bar r : a=a$, and we'll have that $r = \refl{a}$,
so we may transport $q$ to obtain an element of type
$\apd{f}(\Zloop) =^{\tilde A}_{\refl{a}} p$.
This type is judgmentally equal to $\apd{f}(\Zloop)=p$.

In a type theory where higher inductive types are admitted,
one can introduce a circle $\Sc$ as a higher inductive type,
as in \cite[6.1]{hottbook},
and this will easily be shown to be equivalent to $\TorZ$,
without relying on the results of our paper.
The equivalence can be established by observing that $\TorZ$ and $\Sc$
are pointed connected types with loop spaces equivalent to $\zet$.
(For the interested reader: apply \cite[Lemma 7.6.2]{hottbook} with $n=-2$
to reduce to action on paths. Then use connectedness to strengthen
this result from embeddings to equivalences.
Finally, again using connectedness, reduce to the loop spaces
of the respective points, and show they are equivalent to $\zet$ and thus to each other.)
The induction principle \cref{eq:circle-induction} for $\TorZ$ then follows directly.

The reader may wonder whether our result can be generalized to higher spheres.
The answer is no.
One can construct models of type theory with univalent universes and propositional resizing
(viz., essentially of \UniMath{})
that do not have higher inductive types, not even suspensions.
We can take any model and restrict the $n$th universe, $n=0,1,\dots$,
to consist of homotopy $n$-types:
the new zeroth universe $\UU'_0$ consists of the sets of the old universe $\UU_0$,
the new universe $\UU'_1$ consists of the groupoids of $\UU_1$ (hence including $\UU'_0$),
etc.
This construction keeps all the propositions, so it preserves propositional resizing.
By our construction, $\UU'_1$ will contain a circle $\TorZ$,
but it cannot contain the $2$-sphere (the suspension of the circle),
as this is not a groupoid.
Because the $2$-sphere in topology is not an $n$-type for any $n$,
the new model will not contain a $2$-sphere.
The restricted model also shows that one cannot construct a circle in
the lowest universe in \UniMath{}.

\section{Auxiliary results}
\label{sec:auxiliaries}

In this section we present some additional results that are needed in the sequel.

\subsection{Identifying elements in members of families of types}
\label{sec:pathovers}

All proofs in this subsection are by (nested) induction as indicated in the text,
and can moreover be found in \cite[\href{https://github.com/UniMath/UniMath/blob/master/UniMath/MoreFoundations/PathsOver.v}{PathsOver.v}]{UniMath}.

Let $A : \UU$, $B : A \to \UU$, $a_i:A$, $b_i:B(a_i)$ for $i=1,2$, and $p : a_1 = a_2$.
We are interested in identifications of $b_1$ and $b_2$ relative to this data.
We cannot in general form the type $b_1 = b_2$ as their types may be different.
There are several ways to solve this problem. One of them is to transport $b_1$ along
$p$ and form an identity type in $B(a_2)$. Another way would be to consider
identifications $(a_1,b_1) = (a_2,b_2)$ in $\sum_{x:A} B(x)$ and require that the
action of the first projection on such identifications is equal to $p$.
These two ways are equivalent.
The former way is easier to work with and will be the one we choose here.

\begin{definition}\label{def:pathover}
Let $A : \UU$, $B : A \to \UU$, $a_i:A$, $b_i:B(a_i)$ for $i=1,2$, and $p : a_1 = a_2$.
Define the transport function $\trp{B,p} : B(a_1)\to B(a_2)$ by induction on $p$,
setting $\trp{B,\refl{a_1}}(b_1)\defeq b_1$. This is indeed well-typed since
$B(a_1)\jdeq B(a_2)$ in this case.
Now define the type $b_1 =^B_p b_2$ as $\trp{B,p}(b_1) = b_2$.
An element of $b_1 =^B_p b_2$ is called
a \emph{path} from $b_1$ to $b_2$ \emph{over} $p$.
Note that $(b_1 =^B_{\refl{a_1}} b_2) \jdeq (b_1 =_{B(a_1)}  b_2)$.
\end{definition}

Many of the operations on paths have their counterpart for paths over paths.
We define the unit path over a path, composition of paths over paths, and reversal of paths over paths.

\begin{definition}\label{def:pathoveralgebra}
  Let $A : \UU$, $B : A \to \UU$, $a_i:A$, $b_i:B(a_i)$ for $i=1,2,3$, and
  $p_i : a_i = a_{i+1}$ for $i=1,2$. We define:
  \begin{enumerate}[topsep=3pt]
  \item \emph{Unit} $\reflo{b_1} : b_1 =^B_{\refl{a_1}} b_1$;
  \item \emph{Composition of paths over paths}
    $\cto : b_1 =^B_{p_1} b_2 \to b_2 =^B_{p_2} b_3 \to b_1 =^B_{p_1 \ct p_2} b_3$,
    defined by induction first on $p_2$ and then on $r: b_2 = b_3$, by
    setting $q \cto \reflo{b_2} \defeq q$ for all $q: b_1 =^B_{p_1} b_2$;
  \item \emph{Reversal of paths over paths}
    $(\blank)\invo : b_1 =^B_{p_1} b_2 \to b_2 =^B_{(p_1)^{-1}} b_1$,
    defined by induction first on $p_1$ and then on $r: b_1 = b_2$, by
    setting $\refloi{b_1} \defeq \refl{b_1}$.
  \end{enumerate}
\end{definition}

These operations on paths over paths satisfy many of the laws satisfied by the corresponding operations on paths, after some modification.  We
illustrate the modification required to treat composition.  Suppose we have elements $a_i : A$ for $1 \le i \le 4$, paths $p_i : a_i = a_{i+1}$
for $1 \le i \le 3$, elements $b_i:B(a_i)$ for $1 \le i \le 4$, and paths $q_i : b_i =^B_{p_i} b_{i+1}$ over $p_i$ for $1 \le i \le 3$.
Then we the following two paths over paths.
\begin{align*}
   q_1 \cto (q_2 \cto q_3) & : b_1 =^B_{p_1\ct (p_2\ct p_3)} b_4 \\
  (q_1 \cto q_2) \cto q_3  & : b_1 =^B_{(p_1\ct p_2) \ct p_3} b_4
\end{align*}
Since they are of different types, they cannot be compared directly, but there is an
equivalence $\varepsilon$ of type $\left( b_1 =^B_{p_1\ct (p_2\ct p_3)} b_4 \right) \equiv \left( b_1 =^B_{(p_1\ct p_2) \ct p_3} b_4 \right)$
constructed from the associativity law for paths of type $p_1\ct(p_2\ct p_3) = (p_1\ct p_2)\ct p_3$.
The associativity law for composition of paths over paths is an easily constructed identity of type $\varepsilon(q_1 \cto (q_2 \cto q_3)) = (q_1 \cto q_2)\cto q_3$.
For more information we refer the reader to the repositories with formalized proofs
\cite[\href{https://github.com/UniMath/UniMath/blob/master/UniMath/SyntheticHomotopyTheory/Circle2.v}{Circle2.v}]{UniMath}.

In the rest of this section we work in a context with
$A : \UU$, $B : A \to \UU$, $a_i:A$, $b_i:B(a_i)$ for $i=1,2,3$,
$p_i : a_i = a_{i+1}$ for $i=1,2$,

\begin{lemma}\label{lem:compo-over}
  For every $q : b_1 =^B_{p_1} b_2$, the
  function
  \[
    q \cto ({\blank}) : (b_2 =^B_{p_2} b_3) \to (b_1 =^B_{p_1\ct p_2} b_3)
  \]
  is an equivalence.
\end{lemma}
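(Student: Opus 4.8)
The plan is to reduce the claim to the standard fact that concatenating with a fixed path is an equivalence. Fix $q : b_1 =^B_{p_1} b_2$ and argue by path induction on $p_2$. In the case $p_2 \jdeq \refl{a_2}$ we have $a_3 \jdeq a_2$; by \cref{def:pathover} the source type $b_2 =^B_{\refl{a_2}} b_3$ is then judgmentally the identity type $(b_2 = b_3)$ in $B(a_2)$, while the target type $b_1 =^B_{p_1 \ct \refl{a_2}} b_3$ is $b_1 =^B_{p_1} b_3$, i.e.\ $\trp{B,p_1}(b_1) = b_3$ — here $p_1 \ct \refl{a_2} \jdeq p_1$ under the usual convention that path composition is defined by induction on its second argument (as in \cite{UniMath}), and in any case the right unit law together with a one-line induction on an identification of base paths shows that the two target types are equivalent. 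So it suffices to prove that
\[
  q \cto ({\blank}) : (b_2 = b_3) \to (\trp{B,p_1}(b_1) = b_3)
\]
is an equivalence.

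By the clause of \cref{def:pathoveralgebra} defining $\cto$, when the base path of the second argument is $\refl$ the operation $q \cto ({\blank})$ is given by induction on the underlying path with value $q$ at the reflexivity path. Regarding $q$ as an element of $\trp{B,p_1}(b_1) = b_2$, this means $q \cto ({\blank})$ coincides — judgmentally, or at worst by a trivial path induction on its argument — with ordinary left concatenation $q \ct ({\blank}) : (b_2 = b_3) \to (\trp{B,p_1}(b_1) = b_3)$. Concatenation with a fixed path is an equivalence, with inverse $q^{-1} \ct ({\blank})$ (see \cite[Ch.~2]{hottbook}), and a map homotopic to an equivalence is again an equivalence; this discharges the base case and hence the lemma.

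The argument has no essential difficulty — the lemma is really a repackaging of the fact that $q \ct ({\blank})$ is an equivalence — so the only thing to get right is the choice to induct on $p_2$ rather than on $p_1$ or $q$, which is what makes the defining clause of $\cto$ fire; inducting on $p_1$ instead would leave the stray composite $\refl{a_1} \ct p_2$, which does not reduce. The minor bookkeeping in the base case (matching the unfolded path-over types with honest identity types, and absorbing $p_1 \ct \refl{a_2}$ into $p_1$) is routine.
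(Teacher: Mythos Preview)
Your proof is correct, but it takes a somewhat different route from the paper's. The paper performs a triple induction --- on $p_1$, then on $q$ (which has become an ordinary path after the first induction), and finally on $p_2$ --- after which the map is essentially the identity and one concludes directly. You instead induct only on $p_2$, observe that the resulting map coincides with ordinary left-concatenation $q \ct (\blank)$ (since the defining clause of $\cto$ in \cref{def:pathoveralgebra} matches the right-recursive definition of $\ct$ once the second base path is $\refl{}$), and cite the standard fact that concatenation with a fixed path is an equivalence.

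Both arguments are short; yours trades two inductions for an appeal to a known lemma, and makes explicit the conceptual point that $\cto$ specializes to $\ct$. One small correction to your closing commentary: inducting on $p_1$ first is not actually a wrong move --- the paper does precisely that --- provided one continues with the further inductions on $q$ and $p_2$ so that the composite $\refl{}\ct\refl{}$ reduces. Your observation is valid only in the narrower sense that $p_2$ is the right variable if one wants a \emph{single} induction to make the definition of $\cto$ fire.
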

The proof is by induction on first $p_1$, and then $q$, and finally $p_2$.
Then conclude by reflexivity.

If $p=q$, then we can transport paths over $p$ to paths over $q$.

\begin{definition}\label{def:pathover-change-path}
  For every $p,q:a_1=a_2$ and $2$-dimensional path $\alpha : p = q$,
  transport along $\alpha$
  induces an equivalence $\cp{\alpha}: (b_1 =^B_p b_2) \equiv (b_1 =^B_q b_2)$.
  The function $\cp{\alpha}$ is called \emph{change path}, and is defined
  by induction on $\alpha$, setting $\cp{\refl{p}}\defeq \id_{b_1 =^B_p b_2}$.
\end{definition}

\begin{lemma}\label{lem:functorial-change-path}
  For every $p,q,r:a_1=a_2$ and 2-paths $\alpha : p = q$, $\beta : q = r$,
  we have $\cp{\alpha\ct\beta}=\cp{\beta}\circ\cp{\alpha}$.
\end{lemma}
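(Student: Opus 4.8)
The plan is to proceed by path induction on $\alpha$. Since $\alpha : p = q$, inducting on $\alpha$ reduces to the case where $q$ is literally $p$ and $\alpha$ is $\refl{p}$. In that case the left-hand side $\cp{\alpha \ct \beta}$ becomes $\cp{\refl{p} \ct \beta}$, which by the computation rule for path concatenation (namely $\refl{p} \ct \beta \jdeq \beta$, using the standard convention for the definition of $\ct$ by induction on its first argument) is judgmentally $\cp{\beta}$. On the right-hand side, $\cp{\beta} \circ \cp{\alpha}$ becomes $\cp{\beta} \circ \cp{\refl{p}}$, and by the defining clause $\cp{\refl{p}} \defeq \id$ from \cref{def:pathover-change-path}, this is $\cp{\beta} \circ \id$, which is judgmentally $\cp{\beta}$. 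Hence both sides are judgmentally equal in the base case, and we conclude by $\refl{}$.

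The only subtlety is whether the two reductions above really land us at a judgmental equality rather than merely a propositional one, and this depends on the chosen conventions for defining $\ct$ and for the unit law of $\circ$. If $\ct$ is defined by induction on its \emph{second} argument instead, so that $\refl{p} \ct \beta$ is not judgmentally $\beta$, then one would instead induct on $\beta$ (or on both $\alpha$ and $\beta$); inducting on $\beta : q = r$ alone reduces the goal to $\cp{\alpha \ct \refl{q}} = \cp{\refl{q}} \circ \cp{\alpha}$, i.e.\ $\cp{\alpha} = \id \circ \cp{\alpha}$, again a judgmental identity. In either case a single path induction — on whichever of $\alpha$, $\beta$ sits in the "convenient" slot — suffices, with no genuine computation required.

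I do not expect any real obstacle here: this is the kind of functoriality statement that falls out immediately from the inductive definition of $\cp{}$, exactly as the analogous statement $\transport{B}{\alpha \ct \beta}{} = \transport{B}{\beta}{} \circ \transport{B}{\alpha}{}$ does for ordinary transport (see \cite[Lemma~2.3.9]{hottbook}). The one thing to be careful about is matching the direction of composition in the statement ($\cp{\beta} \circ \cp{\alpha}$, reading right-to-left) against the order of concatenation ($\alpha \ct \beta$), but this is handled automatically by the path induction once the base case is set up correctly.
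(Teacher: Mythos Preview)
Your proposal is correct and essentially matches the paper's proof. The paper uses right-recursive composition (so $\alpha \ct \refl{q} \jdeq \alpha$) and accordingly proves the lemma by induction on $\beta$, which is precisely the alternative you spell out in your second paragraph; your primary line of induction on $\alpha$ would be the right choice under the opposite convention, as you note.
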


The proof is by induction on $\beta$ (for right-recursive composition).

\begin{lemma}\label{lem:inv2-change-path}
  For every  $p,q:a_1=a_2$ and 2-path $\alpha : p = q$, taking
  $\alpha^- \defeq \ap{(\blank)^{-1}}(\alpha)$, we have
  $(\cp{\alpha}(\hat p))\invo = \cp{\alpha^-}(\hat p\invo)$
  for every $\hat p: b_1=^B_p b_2$.
\end{lemma}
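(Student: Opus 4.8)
The plan is to prove this by induction on the 2-path $\alpha$, which contracts $q$ to $p$ and $\alpha$ to $\refl{p}$. So it suffices to treat the case $\alpha \jdeq \refl{p}$, and to check that in this case both sides of the asserted equation are judgmentally equal to $\hat p\invo$, after which we conclude by $\refl{\hat p\invo}$.

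Concretely: when $\alpha \jdeq \refl{p}$, the defining clause of change path in \cref{def:pathover-change-path} gives $\cp{\alpha} \jdeq \id_{b_1 =^B_p b_2}$, so the left-hand side $(\cp{\alpha}(\hat p))\invo$ reduces to $\hat p\invo$. For the right-hand side, note first that $\alpha^- \jdeq \ap{(\blank)^{-1}}(\refl{p})$ computes to $\refl{p^{-1}}$ by the computation rule for $\ap{}$; then \cref{def:pathover-change-path} (instantiated at $p^{-1}$ in place of $p$) gives $\cp{\alpha^-} \jdeq \id_{b_2 =^B_{p^{-1}} b_1}$, so $\cp{\alpha^-}(\hat p\invo)$ also reduces to $\hat p\invo$. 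Hence both sides are the same term and the goal is immediate.

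The one thing worth verifying along the way is that the two sides live in the same type, so that the stated identity is well-formed: $\cp{\alpha}(\hat p)$ has type $b_1 =^B_q b_2$, hence its reversal has type $b_2 =^B_{q^{-1}} b_1$; and since $\alpha^- : p^{-1} = q^{-1}$ and $\hat p\invo : b_2 =^B_{p^{-1}} b_1$, the term $\cp{\alpha^-}(\hat p\invo)$ likewise has type $b_2 =^B_{q^{-1}} b_1$. I do not anticipate any real obstacle here — as with the neighbouring lemmas in this subsection, the entire content lies in the fact that the statement has been set up so that a single path induction on $\alpha$ reduces everything to reflexivity; no induction on $p$, on $\hat p$, or on the fibration $B$ is needed.
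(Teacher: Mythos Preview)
Your proof is correct and takes exactly the same approach as the paper, which simply states that the proof is by induction on $\alpha$. Your additional unwinding of the judgmental reductions and the type-check are accurate and make explicit what the paper leaves implicit.
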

The proof is by induction on $\alpha$.

\begin{lemma}\label{lem:invlaw-change-path}
  For every  $p :a_1 = a_2$ define $\iota(p): p^{-1}\ct p = \refl{a_2}$
  by induction on $p$, by setting $\iota(\refl{a_1})\defeq \refl{\refl{a_1}}$.
  Then we have $\cp{\iota(p)}(\hat p\invo \cto {\hat p}) = \refl{b_2}$
  for every $\hat p: b_1=^B_p b_2$.
\end{lemma}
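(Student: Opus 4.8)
The plan is to prove the statement by the same kind of nested induction used throughout this subsection, eliminating $p$ first and then $\hat p$, and then simply reading off that the left-hand side reduces judgmentally to $\refl{b_2}$.

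First I would do path induction on $p : a_1 = a_2$, reducing to the case $p \jdeq \refl{a_1}$. In that case $a_2 \jdeq a_1$, so by the judgmental equality recorded at the end of \cref{def:pathover} the hypothesis $\hat p : b_1 =^B_{\refl{a_1}} b_2$ is just an ordinary path $b_1 =_{B(a_1)} b_2$. Moreover $\iota(\refl{a_1}) \jdeq \refl{\refl{a_1}}$ by the definition of $\iota$, so by \cref{def:pathover-change-path} the operation $\cp{\iota(\refl{a_1})}$ is the identity function. Hence the goal collapses to proving $\hat p\invo \cto \hat p = \refl{b_2}$, all three factors now being ordinary ($\refl{a_1}$-indexed) paths.

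Next I would do path induction on $\hat p$, reducing to $\hat p \jdeq \refl{b_1}$ and hence $b_2 \jdeq b_1$. By the base clause of the reversal operation in \cref{def:pathoveralgebra} we have $\refl{b_1}\invo \jdeq \refl{b_1}$; and since a path over $\refl{a_1}$ is literally an ordinary path, the second factor $\hat p \jdeq \refl{b_1}$ is the unit path $\reflo{b_1}$, so the base clause $q \cto \reflo{b_2} \defeq q$ of composition in \cref{def:pathoveralgebra} gives $\refl{b_1} \cto \reflo{b_1} \jdeq \refl{b_1}$. Therefore the left-hand side reduces judgmentally to $\refl{b_1} \jdeq \refl{b_2}$, and the proof is completed by $\refl{\refl{b_2}}$.

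I do not expect any genuine obstacle here: the statement is designed to hold by judgmental computation once the right eliminations are performed, and the only thing needing care is the order of induction. One must eliminate $p$ before $\hat p$ so that, after both eliminations, the second argument of $\cto$ has become $\reflo{b_1}$, which is exactly the shape the defining clause $q \cto \reflo{b_2} \defeq q$ requires; and similarly one must track that $(\refl{a_1})^{-1} \jdeq \refl{a_1}$ and $\refl{a_1}\ct\refl{a_1} \jdeq \refl{a_1}$ so that all the intermediate types line up. (One could instead route through \cref{lem:compo-over} and the associativity and inverse laws for paths over paths, but the direct nested induction is shorter and is what the \texttt{PathsOver.v} development does.)
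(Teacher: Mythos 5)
Your proof is correct and follows essentially the same route as the paper, which disposes of this lemma (together with \cref{lem:unitlaw-change-path}) "by induction on first $p$, and then on $\hat p$," after which the left-hand side computes judgmentally to $\refl{b_2}$ exactly as you describe. Your remarks about the order of the two inductions and the judgmental reductions of $\invo$, $\cto$, and $\cp{\refl{}}$ are the right points of care and match the formalized development.
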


\begin{lemma}\label{lem:unitlaw-change-path}
  For every  $p :a_1 = a_2$ define $\gamma(p): \refl{a_1} \ct\, p = p$
  by induction on $p$, setting $\gamma(\refl{a_1})\defeq \refl{\refl{a_1}}$.
  Then we have $\cp{\gamma(p)}(\reflo{b_1} \cto {\hat p}) = \hat p$
  for every $\hat p: b_1=^B_p b_2$.
\end{lemma}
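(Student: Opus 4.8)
The plan is to prove the identity by two nested path inductions, in the same spirit as \cref{lem:invlaw-change-path}. First I would do based path induction on $p$, keeping $a_1$ and $b_1$ fixed while generalizing over $a_2$, $b_2$ and $\hat p$. This reduces to the case $p \jdeq \refl{a_1}$, so that $a_2 \jdeq a_1$ and $\hat p$ now has type $b_1 =^B_{\refl{a_1}} b_2 \jdeq b_1 =_{B(a_1)} b_2$. In this case $\gamma(\refl{a_1}) \jdeq \refl{\refl{a_1}}$ by the defining equation for $\gamma$, hence $\cp{\gamma(\refl{a_1})} \jdeq \cp{\refl{\refl{a_1}}} \jdeq \id$ by the reflexivity clause of \cref{def:pathover-change-path}, so the goal collapses to $\reflo{b_1} \cto \hat p = \hat p$.

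A second induction is now needed, and this is the one point that differs from \cref{lem:invlaw-change-path}: the $p$-induction alone does not close the goal, since the defining clause of $\cto$ in \cref{def:pathoveralgebra} only handles composition \emph{on the right} with a unit path, and $\hat p$ is still a variable. So I would next do path induction on $\hat p$, reducing to $b_2 \jdeq b_1$ and $\hat p \jdeq \refl{b_1} \jdeq \reflo{b_1}$. Then $\reflo{b_1} \cto \reflo{b_1} \jdeq \reflo{b_1}$ by the defining equation $q \cto \reflo{b_1} \jdeq q$, and the goal becomes $\reflo{b_1} = \reflo{b_1}$, discharged by reflexivity.

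So, concisely: based path induction on $p$, then path induction on $\hat p$, then conclude by reflexivity. I do not anticipate a real obstacle; the only thing to be careful about is that all the definitional equalities used here — $\gamma(\refl{a_1}) \jdeq \refl{\refl{a_1}}$, $\cp{\refl{p}} \jdeq \id$, $\reflo{b_1} \jdeq \refl{b_1}$, and $q \cto \reflo{b_1} \jdeq q$ — do hold judgmentally, so no explicit transport bookkeeping survives and the two inductions leave nothing but reflexivity.
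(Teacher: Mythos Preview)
Your proof is correct and matches the paper's own argument exactly: the paper states that the proof is ``by induction on first $p$, and then on $\hat p$,'' which is precisely the two-step induction you carry out, with the same judgmental reductions closing the goal by reflexivity.
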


The proof is in both cases by induction on first $p$, and then on $\hat p$.

\begin{definition}\label{lem:compo-ap-ap}
  For every  $p,p':a_1=a_2$, $q,q':a_2=a_3$ and 2-paths
  $\alpha : p = p'$, $\beta : q = q'$, define
  $\apc(\alpha,\beta): (p\ct q)=(p'\ct q')$ by induction
  first on $\beta$ and then on $q$,
  setting $\apc(\alpha,\refl{\refl{a_2}}) \defeq \alpha$.
  (This is well-typed for right-recursive composition.)
\end{definition}

\begin{lemma}\label{lem:compo-change-path}
  For every  $p,p':a_1=a_2$, $q,q':a_2=a_3$ and 2-paths
  $\alpha : p = p'$, $\beta : q = q'$, we have
  $\cp{\apc(\alpha,\beta)}(\hat p \cto \hat q) =
   \cp{\alpha}(\hat p) \cto  \cp{\beta}(\hat q),$
  for every $\hat p: b_1=^B_p b_2$ and $\hat q: b_2=^B_q b_3$.
\end{lemma}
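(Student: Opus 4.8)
The plan is to prove \cref{lem:compo-change-path} by repeated path induction, reducing all the two-dimensional path data (together with the $1$-path $q$) to reflexivity, after which the two sides of the desired identity compute to the same term judgmentally; no higher coherences arise. The order in which the inductions must be performed is forced by how the operations involved were defined: $\cp{(\blank)}$ was defined by induction on its $2$-path argument, and $\apc(\alpha,\beta)$ by induction first on $\beta$ and then on $q$.

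Concretely, I would first induct on $\alpha : p = p'$, reducing it to $\refl{p}$, so that $p' \jdeq p$ and $\cp{\alpha} \jdeq \id$; then on $\beta : q = q'$, reducing it to $\refl{q}$, so that $q' \jdeq q$ and $\cp{\beta} \jdeq \id$; and finally on $q : a_2 = a_3$ itself, reducing it to $\refl{a_2}$, so that $a_3 \jdeq a_2$ and $q' \jdeq \refl{a_2}$. Only at that point does the defining clause $\apc(\alpha,\refl{\refl{a_2}}) \jdeq \alpha$ from \cref{lem:compo-ap-ap} apply, giving $\apc(\alpha,\beta) \jdeq \refl{p}$ and hence $\cp{\apc(\alpha,\beta)} \jdeq \id$; one also uses here that composition is right-recursive, so that $p \ct \refl{a_2} \jdeq p$ keeps the types of the two sides aligned throughout the reduction.

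After these reductions the left-hand side $\cp{\apc(\alpha,\beta)}(\hat p \cto \hat q)$ is judgmentally $\hat p \cto \hat q$, and the right-hand side $\cp{\alpha}(\hat p) \cto \cp{\beta}(\hat q)$ is also judgmentally $\hat p \cto \hat q$ (both occurrences of $\cp{}$ having become $\id$), so the goal is closed by $\refl{}$. (Alternatively, one may keep $\alpha$ general and instead induct on $\beta$, on $q$, and on $\hat q$: reducing $\hat q$ to $\reflo{b_2}$ lets the defining clause $u \cto \reflo{b_2} \jdeq u$ of $\cto$ fire, and then both sides reduce to $\cp{\alpha}(\hat p)$, so one concludes in the same way.) I do not expect any genuine obstacle: the entire content is picking an induction order compatible with the definitions of $\apc$, $\cp{}$ and $\cto$, and tracking the substitutions ($a_3 \jdeq a_2$ and the unit law for $\ct$) that keep both sides well-typed as the data is reduced.
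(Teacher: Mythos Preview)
Your proposal is correct. Your alternative route (induct on $\beta$, then $q$, then $\hat q$, leaving $\alpha$ general so that both sides reduce to $\cp{\alpha}(\hat p)$) is exactly the paper's proof. Your primary route (induct on $\alpha$, then $\beta$, then $q$, leaving $\hat q$ general so that both sides reduce to $\hat p \cto \hat q$) is an equally valid minor variant: it trades the extra induction on $\hat q$ (which fires the defining clause of $\cto$) for an extra induction on $\alpha$ (which fires the defining clause of $\cp{}$), and either choice collapses the goal to a reflexivity after three inductions.
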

The proof is by induction first on $\beta$, then on $q$, and finally on $\hat q$.

\subsection{Dependent elimination for propositional truncation}
\label{sec:proptrunc}

One extra piece of knowledge we need is the \emph{dependent} elimination
principle for propositional truncation, mentioned in \cite[6.9]{hottbook},
and qualified as `not really useful'.%
\footnote{The dependent elimination principle appears to
  be used later in \cite[Lemma 7.3.3]{hottbook}, though.
}
We use this principle in \cref{lem:dep-elim-TorZ}, and it plays an
essential role in the proof of the induction principle for $\TorZ$.

Recall that we assume the presence of a propositional truncation operation
$\Trunc{\blank}$ (mapping each universe to the propositions therein) equipped with
maps $\trunc{\blank} : X \to \Trunc X$ exhibiting $\Trunc{\blank}$ as a reflection into
propositions, i.e., for any proposition $P$, precomposition with $\trunc{\blank}$
induces an equivalence $(\Trunc{X}\to P) \equiv (X\to P)$.
If the inverse map sends $g:X\to P$ to $\bar g : \Trunc X \to P$
such that $\bar g(\trunc x) \jdeq g(x)$ for all $x:X$,
then we say the propositional truncation operation satisfies the
judgmental computation rule for the non-dependent eliminator~%
(\condlabel{cond:JNE}{JNE}).
This can be achieved either by having $\Trunc{\blank}$ defined as a higher inductive type
or using the impredicative encoding~\eqref{eq:trunc-impred}, as we verify below.

\begin{lemma}\label{lem:proptruncind}
  If $A$ is a type and $B(x)$ is a family of propositions
  indexed by the elements $x$ of $\Trunc A$,
  and we are given $g : \prod_{a:A} B(\trunc a)$,
  then there is a function $f : \prod_{x:\Trunc A} B(x)$.
\end{lemma}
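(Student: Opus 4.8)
The plan is to reduce the dependent eliminator to the non-dependent one by passing through the total space $\sum_{x:\Trunc A} B(x)$. First I would observe that, since each $B(x)$ is a proposition, the first projection $\pr_1 : \bigl(\sum_{x:\Trunc A} B(x)\bigr) \to \Trunc A$ is an embedding, so the total space is again a proposition (a subtype of a proposition is a proposition). Hence the total space is a proposition, and in particular a type into which we may eliminate out of $\Trunc A$ using only the \emph{non-dependent} reflection property assumed in the preamble.

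Next I would build the map to feed into that non-dependent eliminator. Given $g : \prod_{a:A} B(\trunc a)$, define $h : A \to \sum_{x:\Trunc A} B(x)$ by $h(a) \defeq (\trunc a, g(a))$. Since $\sum_{x:\Trunc A} B(x)$ is a proposition, the reflection property gives $\bar h : \Trunc A \to \sum_{x:\Trunc A} B(x)$ with $\bar h(\trunc a) = h(a)$ (this identity holds judgmentally if \ref{cond:JNE} holds, but we only need it propositionally, or indeed not at all for the bare statement). Now I would like to set $f(x) \defeq \pr_2(\bar h(x))$, but this has type $B(\pr_1(\bar h(x)))$, not $B(x)$. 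To fix the indexing, note that $\pr_1 \circ \bar h : \Trunc A \to \Trunc A$ agrees with the identity on the image of $\trunc{\blank}$ — namely $\pr_1(\bar h(\trunc a)) = \pr_1(h(a)) = \trunc a$ — and since $\Trunc A$ is a proposition, $\pr_1 \circ \bar h = \id_{\Trunc A}$ anyway; more simply, any two elements of $\Trunc A$ are equal, so we get a path $\sigma_x : \pr_1(\bar h(x)) = x$ for every $x$. Transporting along $\sigma_x$ then yields $f(x) \defeq \trp{B,\sigma_x}(\pr_2(\bar h(x))) : B(x)$, completing the construction.

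There is essentially no hard part here: the whole argument is a standard ``propositions are closed under $\Sigma$, so eliminate into the total space'' maneuver, and the only mild subtlety is the re-indexing by $\sigma_x$, which is automatic because $\Trunc A$ is a proposition. If one wanted the computation rule $f(\trunc a) = g(a)$ to hold (judgmentally under \ref{cond:JNE}, or propositionally otherwise), one would additionally check that $\sigma_{\trunc a}$ can be taken to be $\refl{}$ — or, since $B(\trunc a)$ is a proposition, simply note that $f(\trunc a)$ and $g(a)$ are automatically equal because they are two elements of the same proposition. For the bare statement of \cref{lem:proptruncind}, though, we need only the existence of $f$, so transporting along an arbitrary proof that $\Trunc A$ is a proposition suffices, and the proof is complete.
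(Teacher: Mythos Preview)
Your proof is correct and follows essentially the same approach as the paper: build $h:A\to\sum_{x:\Trunc A}B(x)$ by $h(a)\defeq(\trunc a,g(a))$, observe the total space is a proposition, apply the non-dependent eliminator, and then extract the dependent function. The only cosmetic difference is that where you explicitly transport $\pr_2(\bar h(x))$ along a path $\sigma_x:\pr_1(\bar h(x))=x$, the paper instead phrases this step as ``$\bar h$ is automatically a section of $\pr_1$'' and then invokes the standard equivalence between sections of a first projection and dependent functions; these are the same argument at different levels of unfolding.
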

\begin{proof}
  The function $g$ induces a function $h : A \to \sum_{x:\Trunc A}B(x)$
  by setting $h(a) \defeq (\trunc a,g(a))$.
  Since the codomain of $h$ is a proposition,
  we get an induced map $\Trunc A \to \sum_{x:\Trunc A}B(x)$,
  which is automatically a section of $\pr_1 : \sum_{x:\Trunc A}B(x) \to \Trunc A$.
  By the equivalence between sections of first projections
  and dependent functions, we then get the required $f$.
\end{proof}
Note that the resulting function $f$ automatically satisfies
$f(\trunc a) = g(a)$, for each $a:A$,
since each type $B(\trunc a)$ is a proposition.
If we can achieve a judgmental equality here,
then we say that the propositional truncation operation satisfies the
judgmental computation rule for the dependent eliminator~%
(\condlabel{cond:JDE}{JDE}).

Recall that in \UniMath{}, $\Trunc A \defeq \prod_{P:\Prop}((A \to P) \to P)$, and
the function $\trunc{\blank}: A\to\Trunc A$ is
defined by $\trunc a  (P,f) \defeq f(a)$ for all $P: \Prop$, $f: A\to P$, and $a:A$.
Thus we naturally get a recursion principle: if $P: \Prop$, then any
$g: A \to P$ defines an $f : \Trunc A \to P$ by setting $f(x)\defeq x(P,g)$.
Clearly $f$ satisfies $f(\trunc a) \jdeq \trunc a (P,g) \jdeq g(a)$, for all $a:A$.

For induction the situation is different: if $B: \Trunc A \to \Prop$,
then from any $g: \prod_{a:A} B(\trunc a)$ we can construct a function
$f: \prod_{x:\Trunc A} B(x)$, as in the proof of \cref{lem:proptruncind}.
Unwinding the proof, there will be a transport involved in converting
the section to a dependent function, and we won't have $f(\trunc a) \jdeq g(a)$,
as the path of type $\trunc a = \trunc a$ coming from the proof that $\Trunc A$ is
a proposition need not be the path given by reflexivity.

There are several other ways of defining $f$, but none we are aware of
satisfies the computation rule $f(\trunc a)\jdeq g(a)$.  It is currently
unknown whether it is possible to define in \UniMath{} a propositional
truncation with an induction principle that satisfies the computation rule.

\subsection{The integers}
\label{sec:integers}

For convenience we give a direct inductive
definition of the set of integers, and we give
an alternative induction principle in \cref{sec:integers-induction}.
Thus we get a set of integers $\zet$, a constant $0:\zet$,
and a successor function $s:\zet\to\zet$
that is an equivalence.

\begin{definition}\label{def:integers}
Let $\zet$ be the inductive type with the following three constructors:
\begin{enumerate}[topsep=0pt]
\item $\zzero: \zet$ for the integer number zero,
$0 \defeq \zzero$
\item $\zpos: \NN \to \zet$ for positive {integers},
$1 \defeq \zpos(0),\ldots$.
\item $\zneg: \NN \to \zet$ for negative {integers},
$-1 \defeq \zneg(0),\ldots$
\end{enumerate}
\end{definition}

The \emph{embedding} function $i:\NN\to\zet$ is defined by induction,
setting $i(0)\defeq \zzero$, $i(S(n))\defeq \zpos(n)$.
Like the type $\NN$, the type $\zet$ is a set with decidable equality
and ordering relations,
and we denote its elements often in the usual way as $\ldots,-1,0,1,\ldots$.

One well-known equivalence is \emph{negation} ${-}:\zet\to\zet$,
also called \emph{complement}, inductively defined by setting
$-\zzero\defeq \zzero$,
$-\zpos(n)\defeq \zneg(n)$,
$-\zneg(n)\defeq \zpos(n)$.
Negation is its own inverse.

The \emph{successor} function $s:\zet\to\zet$ is defined inductively setting
$s(\zzero)\defeq \zpos(0)$,
$s(\zpos(n))\defeq \zpos(S(n))$,
$s(\zneg(n))\defeq -i(n)$. For example, we have
$s(-1)\jdeq s(\zneg(0))\jdeq -i(0) \jdeq \zzero \jdeq 0$.
{Denoting the successor function on $\NN$ by $S$,
by} induction on $n:\NN$ one proves $s(i(n))=i(S(n))$,
so that one can say that $s$ extends $S$ on the $i$-image of $\NN$.
From now on we will identify $i(n):\zet$ with $n$,
and $-i(n):\zet$ with $-n$, for all $n:\NN$.

The successor function $s$ is an equivalence.
The inverse $s^{-1}$ of $s$ is called the \emph{predecessor} function.
We denote the $n$-fold iteration of $s$ as $s^n$, and
the $n$-fold iteration of $s^{-1}$ as $s^{-n}$.

Addition of integers is defined inductively by setting
$z + \zzero\defeq z$,
$z + \zpos(n)\defeq s^{n+1}(z)$,
$z + \zneg(n)\defeq s^{-(n+1)}(z)$.
From addition and unary $-$ one can define a binary
\emph{substraction} function setting $z-y \defeq z+(-y)$.


Recall the equivalence $\ev_0 : (\pt=_\TorZ \pt)\to\zet$ from
\cref{sec:preview-Z-tors}, which sends $p$ to $\pr_1(p)_*(0)$.
We have $\refl{\pt} : \pt=_\TorZ \pt$, as well as the operations
of \emph{path reversal} and \emph{path composition} as defined
in \cite[2.1]{hottbook}. These satisfy the laws as stated
and proved in \cite[Lemma 2.1.4]{hottbook}, equipping $\pt=_\TorZ \pt$
with a group structure.
The equivalence $\ev_0$ maps $\refl{\pt}$ via $\id_\zet : \zet\to\zet$ to $0$.
As explained above, $\ev_0$ maps any $p : \pt=_\TorZ \pt$
via $s^k : \zet\to\zet$ to some $k:\zet$. Since $(s^k)^{-1} = s^{-k}$
and $s^k \circ s^l = s^{k+l}$, $\ev_0$ transports path reversal
and path composition to negation and addition, respectively.
This means that the entire group structure of $\pt=_\TorZ \pt$
is transported to the usual group structure on $\zet$,
including all the proofs of the group laws in $\zet$.
(The fact that we do not have to reprove the group laws
is one of the benefits of the univalent approach.)

\subsection{Some induction principles for the integers}
\label{sec:integers-induction}

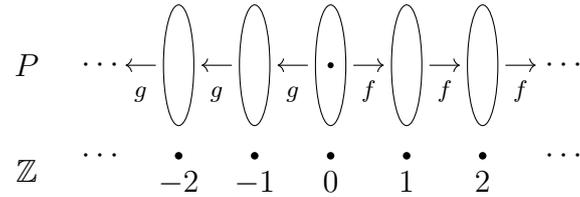
\begin{figure}[b]
  \centering
  \begin{tikzpicture}
    \foreach \x in {-2,-1,0,1,2} {
      \begin{scope}[shift={(\x,0)}]
        \draw (0,0) ellipse (0.2 and 0.8);
        \node[fill,circle,inner sep=1pt,label=below:{$\x$}] (P\x) at (0,-1.2) {};
      \end{scope}
    }
    \foreach \x in {0,1,2} {
      \begin{scope}[shift={(\x,0)}]
        \draw[->] (0.3,0) -- node[below] {\scriptsize$f\strut$} (0.7,00);
      \end{scope}
    }
    \foreach \x in {-3,-2,-1} {
      \begin{scope}[shift={(\x,0)}]
        \draw[->] (0.7,0) -- node[below] {\scriptsize$g\strut$} (0.3,00);
      \end{scope}
    }
    \node[fill,circle,inner sep=.8pt] at (0,0) {};
    \node at (-4,0) {$P$};
    \node at (-4,-1.4) {$\ZZ$};
    \foreach \x in {-3,3.1} {
      \foreach \y in {0,-1.2} {
        \node at (\x,\y) {$\cdots$};
      }
    }
  \end{tikzpicture}
  \caption{Asymmetric integer induction principle}
  \label{fig:integers-induction-asymmetric}
\end{figure}

The definition of $\zet$ yields the following induction principle.
Given $P : \zet \to \UU$, to construct elements $h(z) : P(z)$ for every $z : \zet$,
it suffices to give $h(0): P(0)$ and functions
$f : \prod_{n:\NN}(P(n) \to P(n+1))$ and
$g : \prod_{n:\NN}(P(-n) \to P(-n-1))$,
as illustrated in \cref{fig:integers-induction-asymmetric}.

The function $h:\prod_{z:\ZZ}(P(z))$ defined in this way satisfies
$h(n+1)\jdeq f(n,h(n))$ and $h(-n-1)\jdeq g(n,h(-n))$ for all $n:\NN$.

It is possible to give a more symmetric, but less general induction principle,
if we assume that the functions are equivalences.
In that case we can reorient the $g$'s to point in the same direction as the $f$'s,
allowing us to combine them into a single family $f : \prod_{z:\zet}P(z) \equiv P(z+1)$ of equivalences,
as illustrated in \cref{fig:integers-induction-symmetric}.

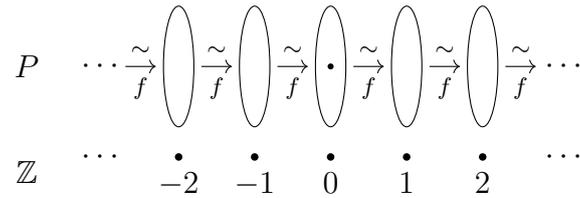
\begin{figure}[b]
  \centering
  \begin{tikzpicture}
    \foreach \x in {-2,-1,0,1,2} {
      \begin{scope}[shift={(\x,0)}]
        \draw (0,0) ellipse (0.2 and 0.8);
        \node[fill,circle,inner sep=1pt,label=below:{$\x$}] (P\x) at (0,-1.2) {};
      \end{scope}
    }
    \foreach \x in {-3,-2,-1,0,1,2} {
      \begin{scope}[shift={(\x,0)}]
        \draw[->] (0.3,0) -- node[above] {\scriptsize$\sim$} node[below] {\scriptsize$f$} (0.7,00);
      \end{scope}
    }
    \node[fill,circle,inner sep=.8pt] at (0,0) {};
    \node at (-4,0) {$P$};
    \node at (-4,-1.4) {$\ZZ$};
    \foreach \x in {-3,3.1} {
      \foreach \y in {0,-1.2} {
        \node at (\x,\y) {$\cdots$};
      }
    }
  \end{tikzpicture}
  \caption{Symmetric integer induction principle}
  \label{fig:integers-induction-symmetric}
\end{figure}

We shall need that in this case, giving an element $h:\prod_{z:\zet}P(z)$
together with identities of type $h(z+1) = f_z(h(z))$ for all $z:\zet$
is equivalent to giving the single element $h(0)$.
We formulate this precisely as follows.

\begin{theorem}\label{thm:integers-univ-symm}
  Let $P : \zet \to \UU$ and $f : \prod_{z:\zet}P(z) \equiv P(z+1)$. The function
  \[
    \varphi : \biggl(\sum_{h:\prod_{z:\zet}P(z)}\prod_{z:\zet}h(z+1) = f_z(h(z))\biggr) \to P(0)
  \]
  that sends $(h,q)$ to $h(0)$ is an equivalence.
\end{theorem}

See \cite[\href{https://github.com/UniMath/UniMath/blob/master/UniMath/SyntheticHomotopyTheory/AffineLine.v}{AffineLine.v, Definition $\mathbb Z$Bi\-Recursion\_weq}]{UniMath}
for the formalization of the proof, and
see \cite[\href{https://github.com/UniMath/UniMath/blob/master/UniMath/SyntheticHomotopyTheory/AffineLine.v}{AffineLine.v, Definition $\mathbb Z$Torsor\-Recursion\_weq}]{UniMath}
for the formalization of a version for arbitrary $\zet$-torsors.

\begin{proof}
  We prove that the fiber over any $p : P(0)$ is contractible.
  We simplify notations a bit by leaving out the types of $h$ and $q$.
  The fiber $\sum_{(h,q)} h(0)=p$ consists of triples $(h,q,r)$ with $r : h(0) = p$.
  By case distinction, $h$ can (equivalently) be split in three parts $(h_-,h_0,h_+)$
  with $h_0 : P(0)$, $h_+ : \prod_{n:\NN}P(n+1)$,  and $h_- : \prod_{n:\NN} P(-n-1)$.
  Since $h(0)=p$ only depends on $h_0$ the pair $(h_0,r)$ with  $r : h_0 = p$
  contracts away, so we're left with the type
  \[
    \varphi^{-1}(p) \equiv \psi_+(p) \times \psi_-(p),
  \]
  where $\psi_+(p)$ and $\psi_-(p)$ are defined as follows.
  \begin{align*}
    \psi_+(p) & \defeq  \sum_{h_+ : \prod_{n:\NN}P(n+1)}  \bigl((h_+(0) = f_0(p))         \\
    & \hspace {8em} \times \prod_{n:\NN}h_+(n+1) = f_n(h_+(n))\bigr) \\
    \psi_-(p) & \defeq  \sum_{h_- : \prod_{n:\NN}P(-n-1)} \bigl((h_-(0) = f_{-1}^{-1}(p))  \\
    & \hspace {8em} \times \prod_{n:\NN}h_-(n+1) = f_{-n-2}^{-1}(h_-(n))\bigr)
  \end{align*}
  The type $\varphi^{-1}(p)$ is contractible, because $\psi_+(p)$ and $\psi_-(p)$ are contractible.
  They are contractible because each specifies a certain function, namely $h_+$ or $h_-$,
  specifies that this function has a certain value at $0$,
  and prescribes the value of the function at all successors.
  Such a specification is unique by the universal property (induction) of $\NN$.
\end{proof}

Let us spell out the inverse function produced in the proof.
It maps $p : P(0)$ to a pair whose first component
is the function that takes $z:\zet$ to $f^z(p):P(z)$, where
\begin{alignat*}2
  f^0(p) &\defeq p, && \\
  f^{n+1}(p) &\defeq f_n(f^n(p)),&\quad&\text{for $n:\NN$,} \\
  f^{-n-1}(p) &\defeq f_{-n-1}^{-1}(f^{-n}(p)),&\quad&\text{for $n:\NN$.}
\end{alignat*}

\section{Main results}
\label{sec:ZTorsors}

In this section, pairs $(X,f)$ will be of type $\sum_{X:\UU}(X\to X)$.
Moreover, nested pairs will be written as tuples.
With these notational simplifications, we rephrase some definitions from
\cref{sec:preview-Z-tors}.
The formalization of \cref{sec:TorZ-recursion} can be found in \cite[\href{https://github.com/UniMath/UniMath/blob/master/UniMath/SyntheticHomotopyTheory/Circle.v}{Circle.v}]{UniMath},
and that of \cref{sec:TorZ-induction} can be found in in \cite[\href{https://github.com/UniMath/UniMath/blob/master/UniMath/SyntheticHomotopyTheory/Circle2.v}{Circle2.v}]{UniMath}.

\begin{definition}\label{def:TorZ}
  The pointed type of $\zet$-torsors is defined by
  \begin{align*}
    \TorZ &\defeq \sum_{(X,f)}\Trunc{(\zet,s)=(X,f)},  \\
    \pt & \defeq (\zet,s,\trunc{\refl{(\zet,s)}}) : \TorZ .
  \end{align*}
  The variables $X,Y,Z$ will be used for elements of $\TorZ$,
  as well as, by an abuse of notation, for their the first components.
  The equivalence $\ev_0$ is defined by
  \begin{align*}
    \ev_0: (\pt =_\TorZ \pt) & \to\zet \\
    p & \mapsto \pr_1(p)_*(0).
  \end{align*}
  The loop of $\TorZ$ is defined as ${\Zloop}\defeq\ev_0^{-1}(1)$,
  satisfying $\pr_1(\Zloop)_* = s$.
\end{definition}

The type $\TorZ$ is equivalent to the more traditionally defined
type of $\ZZ$-torsors, but is more parsimonious.\footnote{Our formalization, however, uses the traditional definition.}
(Traditionally, a $\ZZ$-torsor is defined as a nonempty set upon which the group $\ZZ$ acts freely and transitively.)
Another way to think of it is as the type of Cayley diagrams for $\ZZ$
with respect to the generator $1$.

We remark that the pointed type $\TorZ$ is connected, that is, $\Trunc{\pt=Z}$ for all $Z:\TorZ$.

\begin{lemma}\label{lem:dep-elim-TorZ}
If $P(Z)$ is a proposition for all $Z:\TorZ$, then $P(\pt)$ implies that $P(Z)$ holds for all $Z:\TorZ$.
\end{lemma}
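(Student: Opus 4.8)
The plan is to derive the statement from two ingredients already available: the connectedness of $\TorZ$, recorded in the remark just above, and the fact that a propositional truncation admits a non-dependent eliminator into any proposition (a special case of \cref{lem:proptruncind}).

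First I would fix an arbitrary $Z : \TorZ$ and note that the goal $P(Z)$ is, by hypothesis, a proposition. Connectedness of $\TorZ$ supplies a term of $\Trunc{\pt = Z}$. It therefore suffices to produce a function $(\pt = Z) \to P(Z)$: given such a function, \cref{lem:proptruncind} (applied with $A \defeq (\pt = Z)$ and $B$ the constant family at the proposition $P(Z)$, which is just the non-dependent eliminator into a proposition) extends it to a function $\Trunc{\pt = Z} \to P(Z)$, and feeding in the connectedness witness yields the desired element of $P(Z)$.

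The required function $(\pt = Z) \to P(Z)$ sends a path $q : \pt = Z$ to $\trp{P,q}$ applied to the hypothesis at the base point; that is, we transport the given proof of $P(\pt)$ along $q$. This completes the construction, and since $Z$ was arbitrary we obtain $\prod_{Z:\TorZ} P(Z)$.

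There is essentially no obstacle here; the only point worth flagging is that we use only the non-dependent eliminator of the truncation, so the argument is insensitive to whether the truncation's dependent eliminator computes judgmentally on the point constructor. If one prefers not to cite connectedness as a black box, one can instead unfold $Z$ as $(X,f,e)$ with $e : \Trunc{(\zet,s) = (X,f)}$, eliminate $e$ (again into the proposition $P(Z)$) to a genuine identification $(\zet,s) = (X,f)$, lift it to a path $\pt =_{\TorZ} Z$ using that the truncation component in the definition of $\TorZ$ is a proposition, and transport $P(\pt)$ along that path; this is the same argument with the proof of connectedness inlined.
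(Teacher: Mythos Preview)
Your argument is correct, and the alternative you sketch at the end is essentially what the paper does. The paper unfolds $Z\jdeq(X,f,t)$ and applies \cref{lem:proptruncind} to the \emph{dependent} family $t\mapsto P(X,f,t)$, reducing to $P(X,f,\trunc e)$ for an honest $e:(\zet,s)=(X,f)$, and then performs path induction on $e$ to land at $P(\pt)$; your main route instead packages the first two steps as ``connectedness of $\TorZ$'' and then transports, using only the \emph{non}-dependent eliminator into the fixed proposition $P(Z)$. The trade-off is that the paper's direct version makes the computational behaviour at $\pt$ transparent (it notes that the resulting section returns the given proof of $P(\pt)$ judgmentally whenever the dependent eliminator for $\Trunc{\blank}$ computes), whereas your route hides this behind the connectedness witness and a transport; conversely, your version cleanly isolates that only the non-dependent truncation eliminator is needed.
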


\begin{proof}
Let $P(Z)$ be a proposition for all $Z:\TorZ$, and assume we have a proof
$p:P(\pt)$. Let $(X,f,t):\TorZ$, then $t :\Trunc{(\zet,s)=(X,f)}$.
Since $P(X,f,t)$ is a proposition, it suffices by \cref{lem:proptruncind}
to prove $P(X,f,\trunc{e})$ for all $e:(\zet,s)=(X,f)$.
By induction on $e$ we reduce the task to proving $P(\zet,s,\trunc{\refl{(\zet,s)}})$, which is
the same as $P(\pt)$, so $p$ provides the proof.
\end{proof}

The proof $q: \prod_{Z:\TorZ} P(Z)$ constructed above has the property that $q(\pt)\jdeq p$ if the computation rule for the induction principle
for propositional truncation holds.

Recall that the aim of this section is to show that $\TorZ$ satisfies the induction principle \cref{eq:circle-induction} for the circle.
The recursion principle is the non-dependent version of the induction principle,
namely that there is a function of the following type:
\begin{equation}
  \label{eq:circle-recursion}
  \thinmuskip=10mu              
  \prod_{A: \UU}
  \prod_{a: A}
  \prod_{p: a=_A a}
  \sum_{f: \TorZ \to A}
  \sum_{r: f(\pt)= a}
  \ap{f}(\Zloop) =^{\tilde A}_r p.
\end{equation}

Although the same method works to derive both the recursion and the induction principles,
we opt to do the recursion principle first, as it is slightly simpler,
and prepares the way for the more complicated induction principle.

\subsection{Recursion in \texorpdfstring{$\TorZ$}{TZ}}
\label{sec:TorZ-recursion}

Fix $A:\UU$, $a:A$ and $p: a=_A a$.
We want to construct a function $f$ from $\TorZ$ to $A$
that maps $\pt$ to $a$, as witnessed by some $r : f(\pt) = a$,
such that $\ap{f}(\Zloop) =^{\tilde A}_r p$.
As mentioned in \cref{sec:TorZ-circle},
the last requirement is equivalent to $\ap{f}(\Zloop) = r \ct (p \ct r^{-1})$.
This is because $\trp{\tilde A,r^{-1}}(p) = r \ct (p \ct r^{-1})$,
which in turns follows from the groupoid law $p = \refl{} \ct\, p$
upon induction on $r$.

All input data is present in $p$ and its type.
When defining types and functions depending on the input data,
we use $p$ in various denotations to express this dependence.

To be able to apply \cref{lem:dep-elim-TorZ}, we need to find a suitable proposition.
The idea is to find a correspondence\footnote{%
  A \emph{correspondence} (or \emph{span})
  from a type $T$ to a type $T'$ is a type $C$
  with projections $\pr_1 : C \to T$ and $\pr_2 : C \to T'$.}
from $\TorZ$ to $A$ whose first projection is an equivalence, thereby yielding a map from $\TorZ$ to $A$, cf.~\cref{fig:TorZ-recursion}.

\begin{figure}[t]
  \begin{tikzpicture}[commutative diagrams/every diagram,
    execute at begin node=$, execute at end node=$]
  \node (C) {%
    \sum\limits_{(X,f,t) : \TorZ} ~
    \sum\limits_{a':A} ~
    \sum\limits_{h: X\to a=a'} ~
    \prod\limits_{x:X} ~ h(f(x))=p\ct h(x)};
  \node (T2) [right=3em of C] {%
    \vphantom{\sum\limits_{a':A} ~
      \sum\limits_{h: X\to a=a'} ~
      \prod\limits_{x:X} ~ h(f(x))=p\ct h(x)}
    A};
  \node (T1) [below=of C] {\TorZ};

  \path[commutative diagrams/.cd, every arrow, every label]
    (C) edge[->>,"\pr_1","\equiv"'] (T1)
    (C) edge[->>,transform canvas={yshift=1ex},"\pr_2"] (T2)
    (T1) edge[dotted,bend right=15,"c_p"'] (T2)
    (T1) edge[dotted,to path={.. controls +(-5ex,0)
      and +(-8ex,-5ex) .. (\tikztotarget)
      \tikztonodes},"\vec c_p"] (C);
  \end{tikzpicture}

\caption{\label{fig:TorZ-recursion}Mapping torsors to $A$}
\end{figure}

\begin{definition}\label{def:guided-null-hmtps}
For every $(X,f)$, define
\begin{align*}\label{eq:TBD}
Q_p(X,f)&\defeq \sum_{a':A}~\sum_{h:X\to a=a'}~\prod_{x:X} h(f(x))=p\ct h(x).
\end{align*}
\end{definition}

\begin{lemma}\label{lem:guided-null-hmtps}
The type $Q_p(X,f)$ is contractible for all $(X,f,t):\TorZ$.
\end{lemma}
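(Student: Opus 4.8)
The plan is to reduce the statement to the basepoint and then invoke the symmetric integer recursion principle \cref{thm:integers-univ-symm}. First I would note that being contractible is a (mere) proposition, and that whether $Q_p(X,f)$ is contractible depends only on the pair $(X,f)$, not on the truncation witness $t$. Hence \cref{lem:dep-elim-TorZ}, applied to the family of propositions on $\TorZ$ sending $(X,f,t)$ to the proposition asserting that $Q_p(X,f)$ is contractible, reduces the problem to showing that $Q_p(\zet,s)$ is contractible, where $(\zet,s)$ is the underlying pair of $\pt$ (see \cref{def:TorZ}).

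Unfolding \cref{def:guided-null-hmtps} at $(\zet,s)$, it then remains to show that
\[
  \sum_{a':A}~\sum_{h:\zet\to a=a'}~\prod_{z:\zet} h(s(z)) = p\ct h(z)
\]
is contractible. I would compute the inner two summands using \cref{thm:integers-univ-symm}: for each fixed $a':A$, take the constant family $P(z)\defeq(a=a')$ over $\zet$, and the constant family of equivalences $f_z\defeq(p\ct{\blank}):(a=a')\equiv(a=a')$ (an equivalence, with homotopy inverse $p^{-1}\ct{\blank}$). Using that $s(z)=z+1$, \cref{thm:integers-univ-symm} supplies an equivalence
\[
  \biggl(\sum_{h:\zet\to a=a'}~\prod_{z:\zet} h(s(z)) = p\ct h(z)\biggr)\;\equiv\;(a=a'),
\]
whose underlying map sends $(h,q)$ to $h(0)$. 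Assembling these equivalences over all $a':A$ yields an equivalence $Q_p(\zet,s)\;\equiv\;\sum_{a':A}(a=a')$, and the right-hand type is contractible (contractibility of singletons, \cite[Lemma 3.11.8]{hottbook}); therefore $Q_p(\zet,s)$ is contractible, which completes the proof.

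The routine verifications are that $(p\ct{\blank})$ is an equivalence, that $s(z)$ and $z+1$ agree, and that a family of equivalences indexed by $a':A$ induces an equivalence of the corresponding $\Sigma$-types; none of these presents any difficulty. The one point meriting care is that the hypotheses of \cref{lem:dep-elim-TorZ} really apply — i.e., that contractibility is a proposition and has been phrased as a predicate on $\TorZ$ — since that is what licenses the reduction to the basepoint. For later use one may also record the center of contraction this argument produces for $Q_p(\zet,s)$, namely $a'\defeq a$ together with $h(z)\defeq p^{z}$ (the iterated composite, with $p^0\defeq\refl{a}$ and $p^{z+1}\defeq p\ct p^{z}$) and the evident witness of $h(s(z)) = p\ct h(z)$; this is precisely the image of $\refl{a}$ under the inverse equivalence described after \cref{thm:integers-univ-symm}.
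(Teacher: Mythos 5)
Your proposal is correct and follows essentially the same route as the paper: reduce to the basepoint via \cref{lem:dep-elim-TorZ} (noting $Q_p(X,f)$ ignores $t$), then for each $a':A$ apply \cref{thm:integers-univ-symm} with the constant family $(a=a')$ and the equivalence $p\ct(\blank)$ to identify $Q_p(\zet,s)$ with the contractible singleton $\sum_{a':A}(a=a')$. Your closing description of the center of contraction also matches the paper's subsequent analysis of $\vec c_p(\pt)$.
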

\begin{proof}
  By \cref{lem:dep-elim-TorZ} it suffices to prove that $Q_p(\zet,s)$ is contractible.
  Note that $Q_p(\zet,s)$ is the total space of the family $R_p : A \to \UU$ defined by
  \[
    R_p(a') \defeq \sum_{h:\zet \to a=a'}~\prod_{z:\zet} h(z+1)=p\ct h(z).
  \]
  Note furthermore that $\sum_{a':A} a=a'$ is contractible with center $(a,\refl{a})$.
  Thus, to show that $Q_p(\zet,s)$ is contractible,
  it suffices to define an equivalence
  \[
    \varphi_{a'} : \biggl(\sum_{h:\zet \to a=a'}\prod_{z:\zet} h(z+1)=p\ct h(z)\biggr)
    \xrightarrow{\sim}{} (a=a')
  \]
  for each $a':A$.
  The intention is now to invoke \cref{thm:integers-univ-symm}.
  Indeed, let us define the constant type family $P_{a'}(z)\defeq(a=a')$
  over $\zet$. Also, define $f_{a'}(z) : P_{a'}(z) \to P_{a'}(z+1)$
  by $f_{a'}(z)(q) \defeq p \ct q$ for all $z:\zet$ and $q: a=a'$.
  Then each $f_{a'}(z)$ is an equivalence (with inverse $q \mapsto p^{-1}\ct q$).
  Thus, applying \cref{thm:integers-univ-symm}
  shows that $\varphi_{a'}$ is an equivalence,
  where $\varphi_{a'}(h,q) \defeq h(0)$.
\end{proof}

A relevant observation at this point is that $Q_p(X,f)$ does not depend on
$t:\Trunc{(\zet,s)=(X,f)}$.
This means that we actually apply in the proof above the non-dependent version
of \cref{lem:dep-elim-TorZ}, for which the computation rule holds also in \UniMath{}.
For $Z \jdeq (X,f,t) : \TorZ$, let $\vec{c}_p(Z)$ denote the center of contractibility
of $Q_p(X,f)$ as constructed in the proof above.
  We introduce the notation $(c_p(Z),\tilde{c}_p(Z),\hat{c}_p(Z))\defeq\vec{c}_p(Z)$ for its components.
  The value of $\vec{c}_p(\pt)$ can be uncovered by a careful analysis of the steps of the proof.
First, the center of $\sum_{a':A} a=a'$ is $(a,\refl{a})$.
This center is pulled back by $\varphi_{a}$ to a center
$(a,c)$ of $Q_p(\zet,s)$, where $c$ is the center of
$\varphi_{a}^{-1}(\refl{a})$ coming from the proof
that $\varphi_{a}$ is an equivalence. The latter proof
is the above instance of \cref{thm:integers-univ-symm}.
Unraveling this instance, and using the remark at the
end of the proof of \cref{thm:integers-univ-symm},
tells us that $c$ is a pair $(h,q)$ with $h(z)\jdeq p^z$
for all $z:\zet$. Indeed, $\varphi_{a}(h,q) = h(0) \jdeq \refl{a}$.
Moreover, $q$ has type $\prod_{z:\zet} h(z+1)=p\ct h(z)$.
Wrapping up, $\vec c_p(\pt) = (a,h,q)$,
with judgmental equality if~\cref{cond:JNE} holds.

The analysis in the previous paragraph
means we have achieved one of our goals,
namely that the function $c_p$ from $\TorZ$ to $A$
maps $\pt$ to $a$, definitionally if~\cref{cond:JNE} holds.
In any case, let $r \defeq \tilde c_p(\pt,0)^{-1} : c_p(\pt) = a$,
which reduces under~\cref{cond:JNE} to $\refl{a}$.
We will now deal with the other goal,
namely that $c_p$ acting on $\Zloop$ yields $r \ct (p \ct r^{-1})$.

\begin{lemma}\label{lem:ap-c-tilde-c}
For all $X,Y:\TorZ$, $e: X=Y$ and $x:X$ we have
$\ap{c_p}(e) = \tilde c_p(X,x)^{-1}\ct \tilde c_p(Y,\ptoe{e}(x))$,
where $\ptoe{e}\defeq \pr_1(e)_* : X\to Y$.
\end{lemma}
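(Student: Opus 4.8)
The plan is to prove the statement by path induction on $e$. Fix $X:\TorZ$ and $x:X$ (using the established abuse of notation whereby $X$ also names its underlying type), and for $Y:\TorZ$ and $e:X=Y$ take as motive the assertion
\[
  \ap{c_p}(e) = \tilde c_p(X,x)^{-1}\ct\tilde c_p(Y,\ptoe{e}(x)),
\]
which is a well-typed identification in $c_p(X)=c_p(Y)$, since $\tilde c_p(X,x):a=c_p(X)$ and $\tilde c_p(Y,\ptoe{e}(x)):a=c_p(Y)$. By path induction (based at $X$) it suffices to establish the motive when $Y\jdeq X$ and $e\jdeq\refl{X}$.

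In that case the left-hand side reduces judgmentally: $\ap{c_p}(\refl{X})\jdeq\refl{c_p(X)}$. On the right-hand side, $\ptoe{\refl{X}}\jdeq\pr_1(\refl{X})_*$ is transport along a reflexivity path, hence the identity function, so $\ptoe{\refl{X}}(x)\jdeq x$, and the right-hand side becomes $\tilde c_p(X,x)^{-1}\ct\tilde c_p(X,x)$. This equals $\refl{c_p(X)}$ by the left-inverse law for path composition \cite[Lemma 2.1.4]{hottbook}, so the two sides agree and the induction is complete.

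The argument uses nothing about the explicit construction of $\vec c_p$ beyond the typing $c_p:\TorZ\to A$ and $\tilde c_p(Z):X\to(a=c_p(Z))$ for $Z\jdeq(X,f,t):\TorZ$; in particular the value of $\vec c_p(\pt)$ computed earlier is not needed. Accordingly there is no real obstacle here: the only care required is the bookkeeping around $\pr_1(e)$ denoting the action of $e$ on the underlying-type component of a torsor, together with the two judgmental reductions ($\ap{c_p}(\refl{})\jdeq\refl{}$ and $\pr_1(\refl{})_*\jdeq\id$) that make the base case collapse to a single instance of the groupoid law.
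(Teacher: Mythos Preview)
Your proof is correct and follows essentially the same approach as the paper: induction on $e$ reduces to the case $e\jdeq\refl{X}$, where the left-hand side is $\refl{c_p(X)}$, $\ptoe{\refl{X}}(x)\jdeq x$, and the right-hand side collapses to $\tilde c_p(X,x)^{-1}\ct\tilde c_p(X,x)$, which equals $\refl{c_p(X)}$ by the inverse law. Your write-up is simply more explicit about the judgmental reductions and the groupoid law being invoked.
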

\begin{proof}
By using induction on $e$ we only have to check the case where
$X\jdeq Y$ and $e\jdeq\refl{X}$. In this case $\ap{c_p}(e)$ is
$\refl{c_p(X)}$. On the right-hand side we get $\ptoe{e}(x)\jdeq x$,
and hence this side simplifies to a reflexivity path of
the correct type, as $\tilde c_p(X,x)$ has type $a=c_p(X)$.
\end{proof}

We apply the above lemma with $X\jdeq Y\jdeq \pt$ and $e\jdeq{\Zloop}: \pt=\pt$.
Then we have $\ptoe{e}(z)=s(z)=z+1$.
Note that $\hat c_p(\pt,0) : \tilde c_p(\pt,1) = p \ct \tilde c_p(\pt,0)$.
Hence, taking $z\defeq 0$ in \cref{lem:ap-c-tilde-c}, it follows that
\[
  \ap{c_p}(\Zloop) = \tilde c_p(\pt,0)^{-1} \ct \tilde c_p(\pt,1)
  = r \ct (p \ct r^{-1}).
\]
This means we have achieved our second goal as well,
and we've produced an element of the type \eqref{eq:circle-recursion}.

\subsection{Induction in \texorpdfstring{$\TorZ$}{TZ}}
\label{sec:TorZ-induction}

Fix $A:\TorZ\to\UU$, $a:A(\pt)$, and $p: a=^A_{\Zloop} a$.
On the basis of this input data, we will construct a function $f$ of
type $\prod_{Z:\TorZ} A(Z)$ that maps $\pt$ to $a$,
as witnessed by some $r : f(\pt) = a$,
such that $\apd{f}(\Zloop) =^{\tilde A}_r p$.
We follow the pattern of the non-dependent case
in \cref{sec:TorZ-recursion}, but keep in mind that
$A$ is now not constant and $p$ is a \emph{path over a path}.
{We make extensive use of the functions and lemmas from \cref{sec:pathovers}.}

The following lemma follows from the fact that $\zet$ is a set
and $s: \zet\to\zet$ is an equivalence.

\begin{lemma}\label{lem:paths-in-TorZ}
  Suppose $q : (\zet,s)=(X,f)$.  Then $X$ is a set and $f: X\to X$ is an equivalence.
  Morever, with $\ptoe{q}\defeq \pr_1(q)_*$ the equivalence induced by $q$, we have
  $f^n(x) = (\ptoe{q}\circ s^n \circ \ptoe{q}^{-1})(x) = \ptoe{q}(\ptoe{q}^{-1}(x)+n)$,
  for all $n:\zet$ and $x:X$.
\end{lemma}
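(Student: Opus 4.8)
The plan is to prove the lemma by based path induction on $q$. Since $q : (\zet,s) = (X,f)$ with the left endpoint $(\zet,s)$ fixed, path induction lets us assume that $(X,f)$ is $(\zet,s)$ and that $q$ is $\refl{(\zet,s)}$; then $X \jdeq \zet$, $f \jdeq s$, and the equivalence $\ptoe{q}$ together with its inverse both reduce to $\id_\zet$. Under this reduction the first two assertions of the lemma become exactly the statements that $\zet$ is a set and that $s : \zet \to \zet$ is an equivalence, both of which were recorded in \cref{sec:integers} (see \cref{def:integers} and the remarks following it).

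It then remains to establish the displayed chain of equalities in this base case. There the middle term $(\ptoe{q}\circ s^n\circ\ptoe{q}^{-1})(x)$ reduces to $s^n(x)$, which is $f^n(x)$ since $f \jdeq s$, so the first equation holds by reflexivity; and the right-hand term $\ptoe{q}(\ptoe{q}^{-1}(x)+n)$ reduces to $x + n$. Hence the whole statement reduces to the elementary identity $s^n(x) = x + n$ for all $n,x : \zet$. I would prove this by case analysis on $n$ following \cref{def:integers}: for $n \jdeq \zzero$ both sides are $x$; for $n \jdeq \zpos(m)$ with $m : \NN$, the definition of addition gives $x + \zpos(m) \jdeq s^{m+1}(x)$, which agrees with the convention for iterating $s$ over $\zet$; and for $n \jdeq \zneg(m)$, the definition of addition gives $x + \zneg(m) \jdeq s^{-(m+1)}(x)$, the $(m{+}1)$-fold iterate of $s^{-1}$, again matching the convention for $s^{\zneg(m)}$. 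In each case the two sides are in fact judgmentally equal, so no genuine transport or path algebra is needed.

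There is no real obstacle in this argument: the only mathematical content beyond unwinding definitions is the identity $s^n = (x \mapsto x+n)$, which is essentially built into the definition of integer addition, plus the already-available facts that $\zet$ is a set and $s$ is an equivalence. The only thing to be careful about is bookkeeping — that the lemma is phrased with $X$ and $f$ as separate metavariables rather than as the two components of a single pair (harmless, since path induction forces both to reduce to the corresponding projections of $(\zet,s)$), and that one uses the same convention for the iterates $s^n$, $n:\zet$, here as in the definition of addition on $\zet$.
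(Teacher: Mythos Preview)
Your proof is correct and is precisely the argument the paper has in mind: the paper only remarks that the lemma ``follows from the fact that $\zet$ is a set and $s:\zet\to\zet$ is an equivalence,'' which is exactly what your based path induction on $q$ reduces to. Your additional unpacking of $s^n(x)=x+n$ via the case split of \cref{def:integers} is accurate and indeed judgmental given the paper's definition of addition.
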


Note that for fixed $x:X$ the expression $\ptoe{q}^{-1}(x)+n$ can be seen as
the function shifting $n:\zet$ by $\ptoe{q}^{-1}(x)$ positions, indeed an equivalence.
Hence $f^n(x)$ as a function of $n$ is an equivalence from $\zet$ to $X$.
Recall that we may denote $f^n(x)$ by $n+x$,
as $X$ is a $\ZZ$-torsor via $f$.

\begin{definition}\label{def:loop-s-iterated}
  For every $Z\defeq(X,f,t):\TorZ$ and $x:X$,
  define $s^Z_x: \pt =_\TorZ Z$ by the equivalence
  $e_x(n)\defeq f^n(x)$ using the univalence axiom. Indeed,
  $f\circ e_x = e_x \circ s$, as both functions map $n$ to $f^{n+1}(x)$.
\end{definition}

Applying \cref{thm:integers-univ-symm}, we will need two auxiliary
results about the paths $s^\pt_x$, one for $x=0$ and
the other for the (symmetric) induction step.
In \cref{def:loop-s-iterated}, if $Z\jdeq\pt$ and $x=0$, we get $e_0 = \id$.
Applying the univalence axiom gives thus the first result.

\begin{lemma}\label{lem:s-pt-zero}
  There is a path $\gamma_0 : \refl{\pt} = s^\pt_0$.
\end{lemma}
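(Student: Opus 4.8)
The plan is to unwind \cref{def:loop-s-iterated} and observe that $s^\pt_0$ collapses to $\refl{\pt}$ as soon as we feed in the identification $e_0 = \id_\zet$ recorded in the remark just above. Recall that $s^\pt_0$ is built from the equivalence $e_0 : \zet \equiv \zet$, $e_0(n) \jdeq s^n(0)$, together with the commuting datum $s \circ e_0 = e_0 \circ s$, along the following route: univalence turns $e_0$ into a path $\zet = \zet$; the commuting datum shows that the transport of $s$ along that path is $s$, producing a path $(\zet,s) = (\zet,s)$ in $\sum_{X:\UU}(X\to X)$; and since $\Trunc{(\zet,s)=(\zet,s)}$ is a proposition, this lifts uniquely to a path $\pt =_\TorZ \pt$.

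First I would note that, since $\zet$ is a set, the function type $\zet\to\zet$ is a set, so $s =_{\zet\to\zet} s$ is a proposition; hence the commuting datum is forced to equal $\refl{s}$ and contributes nothing, and likewise the truncation component contributes nothing. Thus all of $s^\pt_0$ is determined by its underlying path $\zet = \zet$, which is the path obtained from $e_0$ by univalence. Now apply the computation rule of the univalence axiom: the path associated to the identity equivalence is $\refl{\zet}$. Since $e_0 = \id_\zet$, the underlying path of $s^\pt_0$ is therefore $\refl{\zet}$, and tracing this back through the two reconstruction steps (transport of $s$ along $\refl{\zet}$ is $s$ with path-over $\refl{s}$; hence $\refl{(\zet,s)}$; hence, lifting to $\TorZ$, $\refl{\pt}$) gives $s^\pt_0 = \refl{\pt}$. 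Taking $\gamma_0$ to be the inverse of this identification finishes the proof.

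The only delicate point is the bookkeeping in the reconstruction, namely checking that the univalence/structure-identity machinery genuinely sends $(\id_\zet,\refl{s})$ to $\refl{\pt}$ rather than merely to something equal to it; as just explained this is painless here because $\zet\to\zet$ is a set and the truncation is a proposition, so all the higher data is automatically trivial. Should this packaging prove awkward, an alternative is to use that $\ev_0 : (\pt=_\TorZ\pt)\to\zet$ is an equivalence (\cref{def:TorZ}), hence an embedding: it then suffices to give a path $\ev_0(\refl{\pt}) = \ev_0(s^\pt_0)$ in $\zet$, and by construction of $s^\pt_0$ from $e_0$ via univalence we have $\ev_0(s^\pt_0) = \pr_1(s^\pt_0)_*(0) = e_0(0) = \id_\zet(0) \jdeq 0 \jdeq \ev_0(\refl{\pt})$, so that $\gamma_0$ is obtained by applying the inverse of $\ap{\ev_0}$.
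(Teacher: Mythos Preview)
Your proposal is correct and follows essentially the same approach as the paper: the paper's entire argument is the sentence preceding the lemma, ``if $Z\jdeq\pt$ and $x=0$, we get $e_0=\id$; applying the univalence axiom gives thus the first result.'' You have simply unfolded this one-liner, making explicit that the commuting datum lives in a proposition (since $\zet\to\zet$ is a set) and that the truncation component is irrelevant, so that the whole path $s^\pt_0$ is determined by its underlying $\zet=\zet$ component, which univalence identifies with $\refl{\zet}$ once $e_0=\id_\zet$. Your alternative via the embedding $\ev_0$ is a genuinely different (and slicker) route not taken in the paper, but your primary argument matches theirs.
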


For the second result, note that prefixing $s^Z_x$ by ${\Zloop}$
amounts to precomposing the equivalence $e_x$ with $s$.
We have $(e_x\circ s)(n) = e_x(n+1) = f^{n+1}(x) = f^n(f(x)) =e_{f(x)}(n)$,
so $e_x\circ s = e_{x+1}$. Applying the univalence axiom we get:

\begin{lemma}\label{lem:loop-s-iterated}
  For every $Z\defeq(X,f,t):\TorZ$ and $x:X$, we have a path
  ${\delta^Z_x}: {\Zloop} \ct s^Z_x = s^Z_{x+1}$.
\end{lemma}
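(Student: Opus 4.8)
The plan is to reduce the asserted equality of paths in $\TorZ$ to an equality of their underlying transport functions $\zet\to X$, which is then immediate from the pointwise computation recorded just before the lemma. The key observation is that, for every $Z\jdeq(X,f,t):\TorZ$, the map $(\pt=_\TorZ Z)\to(\zet\to X)$ sending $w$ to $\pr_1(w)_*$ is an \emph{embedding}. Indeed, since the truncation component of $\TorZ$ is a proposition, applying $\pr_1$ gives an equivalence $(\pt=_\TorZ Z)\simeq\bigl((\zet,s)=_{\sum_{W:\UU}(W\to W)}(X,f)\bigr)$; and by the univalence axiom applied to the family $W\mapsto(W\to W)$, an identification $(\zet,s)=(X,f)$ amounts to an equivalence $g:\zet\simeq X$ together with a proof that $g\circ s=f\circ g$ as maps $\zet\to X$. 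The second datum is an identification in $\zet\to X$, which is a set because $X$ is a set by \cref{lem:paths-in-TorZ}, hence it lives in a proposition. So forgetting the square, and then forgetting that $g$ is an equivalence, exhibits $w\mapsto\pr_1(w)_*$ as a composite of embeddings. In particular, two paths in $\pt=_\TorZ Z$ with equal induced transport functions are themselves equal.

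It therefore suffices to prove $\pr_1(\Zloop\ct s^Z_x)_* = \pr_1(s^Z_{x+1})_*$ as functions $\zet\to X$. By functoriality of $\ap{\pr_1}$ and of $\mathrm{idtoeqv}$, the left-hand side equals $\pr_1(s^Z_x)_*\circ\pr_1(\Zloop)_*$; by the construction of $s^Z_x$ via univalence from $e_x$ we have $\pr_1(s^Z_x)_* = e_x$, and $\pr_1(\Zloop)_* = s$ by \cref{def:TorZ}, so the left-hand side is $e_x\circ s$. Likewise the right-hand side is $\pr_1(s^Z_{x+1})_* = e_{x+1}$, again by construction. Finally $e_x\circ s = e_{x+1}$ follows by function extensionality from the pointwise identity $(e_x\circ s)(n) = e_x(n+1) = f^{n+1}(x) = f^n(f(x)) = e_{x+1}(n)$ established just above the lemma. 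This produces the desired path $\delta^Z_x$.

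The only real subtlety is the embedding step: one must recognize that once the underlying equivalence of an identification of $\zet$-torsors has been fixed, the remaining commuting-square datum (and the truncation witness) live in propositions, so that an equality of paths in $\TorZ$ can be checked purely on the induced functions $\zet\to X$. That both pieces are propositions rests on the carrier $X$ being a set, which is exactly what \cref{lem:paths-in-TorZ} supplies; everything else is routine functoriality of $\ap{}$, transport, and $\mathrm{ua}$, together with the one-line pointwise calculation.
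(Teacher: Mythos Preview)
Your proof is correct and follows essentially the same route as the paper: both reduce the equality of paths in $\TorZ$ to the equality $e_x\circ s = e_{x+1}$ of the underlying equivalences and then invoke univalence. You have simply made explicit what the paper compresses into ``applying the univalence axiom'', namely that the truncation witness and the commuting-square datum are propositions (the latter because $X$ is a set), so that the passage from a path in $\pt=_\TorZ Z$ to its transport function is an embedding.
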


Now we are ready to derive the induction principle using the same
technique as for the recursion principle. We reuse notations as much as
possible, but take care that all types are different.

\begin{definition}\label{def:guided-null-hmtps-dep}
For every $Z\defeq(X,f,t):\TorZ$, define
\begin{align*}\label{eq:TBD}
Q_p(Z)&\defeq \sum_{a' : A(Z)}~\sum_{h : \prod_{x:X} a =^A_{s^Z_x} a'}~\prod_{x:X} h(f(x)) = \cp{\delta^Z_x}(p\cto h(x)),
\end{align*}
where $\delta^Z_x$ comes from \cref{lem:loop-s-iterated}.
\end{definition}
Note that, unlike the $Q_p$ from~\cref{def:guided-null-hmtps},
this version depends crucially on the $t$-component of $Z$ through
both $s^Z$ and $\delta^Z$.

\begin{lemma}\label{lem:guided-null-hmtps-dep}
  For every $Z:\TorZ$, the type $Q_p(Z)$ is contractible.
\end{lemma}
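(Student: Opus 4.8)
The plan is to mimic the strategy of \cref{lem:guided-null-hmtps}: use \cref{lem:dep-elim-TorZ} to reduce contractibility of $Q_p(Z)$ for all $Z:\TorZ$ to contractibility of $Q_p(\pt)$, and then establish the latter by an application of \cref{thm:integers-univ-symm}. The key point to check before invoking \cref{lem:dep-elim-TorZ} is that contractibility is a proposition, which is standard; note that here, unlike in the recursive case, $Q_p$ genuinely depends on the $t$-component of $Z$, so we really use the dependent elimination lemma and cannot expect a judgmental computation rule.

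So it suffices to prove $Q_p(\pt)$ is contractible. First I would use \cref{lem:s-pt-zero} to rewrite the inner type family: the path $\gamma_0 : \refl{\pt} = s^\pt_0$ lets us identify $a =^A_{s^\pt_0} a$ with $a =^A_{\refl{\pt}} a \jdeq (a = a)$ via $\cp{\gamma_0}$, and more to the point, it tells us that evaluating $h$ at $0$ lands (up to $\cp{\gamma_0}$) in $a=a'$. As in \cref{lem:guided-null-hmtps}, observe that $\sum_{a':A(\pt)} a =^A_{\refl{\pt}} a'$ is contractible with center $(a,\refl{a})$ (this is just $\sum_{a':A(\pt)} a = a'$, based path induction), so it suffices to produce, for each $a':A(\pt)$, an equivalence
\[
  \varphi_{a'} : \Bigl(\sum_{h:\prod_{z:\zet} a=^A_{s^\pt_z}a}\prod_{z:\zet} h(s(z)) = \cp{\delta^\pt_z}(p\cto h(z))\Bigr) \xrightarrow{\sim} (a=^A_{\refl{\pt}}a')
\]
(after transporting along $\gamma_0$ on the codomain, or equivalently factoring $\varphi_{a'}$ as $\cp{\gamma_0}\circ(\lambda(h,q).\,h(0))$).

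To build $\varphi_{a'}$ I would apply \cref{thm:integers-univ-symm} with the type family $P_{a'}(z) \defeq (a =^A_{s^\pt_z} a')$ over $\zet$. The required family of equivalences $f_{a'} : \prod_{z:\zet} P_{a'}(z)\equiv P_{a'}(z+1)$ is supplied by the composite $P_{a'}(z) = (a=^A_{s^\pt_z}a') \xrightarrow{p\cto(\blank)} (a=^A_{\Zloop\ct s^\pt_z}a') \xrightarrow{\cp{\delta^\pt_z}} (a=^A_{s^\pt_{z+1}}a')$, using \cref{lem:compo-over} to see that $p\cto(\blank)$ is an equivalence, that $\cp{\delta^\pt_z}$ is an equivalence by \cref{def:pathover-change-path}, and \cref{lem:loop-s-iterated} for the path $\delta^\pt_z : \Zloop\ct s^\pt_z = s^\pt_{z+1}$. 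Under this identification, the condition $h(s(z)) = \cp{\delta^\pt_z}(p\cto h(z))$ is exactly $h(z+1) = f_{a'}{}_z(h(z))$, so the domain of $\varphi_{a'}$ is literally the domain of the map $\varphi$ of \cref{thm:integers-univ-symm}, and $\varphi$ sends $(h,q)$ to $h(0):P_{a'}(0) = (a=^A_{s^\pt_0}a')$, which we compose with $\cp{\gamma_0}$ to reach $(a=^A_{\refl{\pt}}a') \jdeq (a=a')$; \cref{thm:integers-univ-symm} gives that this is an equivalence. Assembling: $Q_p(\pt) = \sum_{a':A(\pt)} (\text{domain of }\varphi_{a'})$ is equivalent over $A(\pt)$ to $\sum_{a':A(\pt)}(a = a')$, hence contractible.

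I expect the main obstacle to be bookkeeping rather than conceptual: keeping the types straight when passing between $a=^A_{s^\pt_z}a'$, $a=^A_{\Zloop\ct s^\pt_z}a'$, and $a=^A_{s^\pt_{z+1}}a'$, and in particular making sure that the map "$f$ commutes with the successor" in the sense demanded by \cref{thm:integers-univ-symm} matches, on the nose or up to a controlled identification, the defining condition $h(f(x)) = \cp{\delta^Z_x}(p\cto h(x))$ of $Q_p$ — recalling that $f = s$ and $f(x) = x+1$ for $Z\jdeq\pt$ by \cref{lem:paths-in-TorZ}. A secondary subtlety is that we should record, for later use (as in the remarks following \cref{lem:guided-null-hmtps}), what the center of contractibility of $Q_p(\pt)$ is: unwinding the inverse given at the end of \cref{sec:integers-induction}, it is the triple $(a, h, q)$ where $h(z) = f_a{}^z(\refl a)$, i.e.\ the iterated composite of $p$'s corrected by the $\cp{\delta^\pt}$'s, but establishing precise judgmental equalities there will be delicate because of the absence of \cref{cond:JDE}, so we will only claim a propositional identification.
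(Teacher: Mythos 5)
Your proposal is correct and follows essentially the same route as the paper: reduce to $Q_p(\pt)$ via \cref{lem:dep-elim-TorZ} (noting the genuine dependence on the $t$-component), then apply \cref{thm:integers-univ-symm} with $P_{a'}(z) \defeq (a =^A_{s^\pt_z} a')$ and $f_{a'}(z) \defeq \cp{\delta^\pt_z}(p \cto (\blank))$, with $\varphi_{a'}(h,q) \defeq h(0)$. The only (inconsequential) differences are that the paper keeps the codomain as $a =^A_{s^\pt_0} a'$ and instead transports the center of contraction to $(a,\cp{\gamma_0}(\reflo{a}))$ using \cref{lem:s-pt-zero}, whereas you compose the evaluation with a change-path map --- where, since $\gamma_0 : \refl{\pt} = s^\pt_0$, the map landing in $a =^A_{\refl{\pt}} a'$ should be $\cp{\gamma_0^{-1}}$ rather than $\cp{\gamma_0}$, a harmless direction slip.
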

\begin{proof}
  By \cref{lem:dep-elim-TorZ} it suffices to prove that $Q_p(\pt)$ is contractible.
  We have $Q_p(\pt)\jdeq \sum_{a':A(\pt)} R(a')$ for $R : A(\pt) \to \UU$ defined by
  \[
    R_p(a') \defeq \sum_{h : \prod_{z:\zet} a =^A_{s^\pt_z} a'}~
    \prod_{z:\zet} h(z+1) = \cp{\delta^\pt_z}(p \cto h(z)).
  \]
  We show that $\sum_{a':A(\pt)} (a =^A_{s^\pt_0} {a'})$ is contractible.
  Let $\reflo{a}$ be the reflexivity path at $a$ over $\refl{\pt}$.
  Note that $\sum_{a':A(\pt)} (a =^A_{\refl{\pt}} a') \jdeq \sum_{a':A(\pt)} (a = a')$
  is contractible with center $(a,\reflo{a})$. By \cref{lem:s-pt-zero}
  $\sum_{a':A(\pt)} (a =^A_{s^\pt_0} {a'})$ is contractible with center
  $(a,\cp{\gamma_0}(\reflo{a}))$.

  Thus, to show that $Q_p(\pt)$ is contractible,
  it suffices to define an equivalence
  \[
    \varphi_{a'} : R_p(a') \xrightarrow{\sim}{} (a =^A_{s^\pt_0} {a'})
  \]
  for each $a' : A(\pt)$.
  We again invoke \cref{thm:integers-univ-symm},
  this time with the family $P_{a'} : \zet \to \UU$ given
  by $P_{a'}(z) \defeq (a =^A_{s^\pt_z} {a'})$
  and the equivalences $f_{a'} : \prod_{z:\zet} P_{a'}(z)\equiv P_{a'}(z+1)$
  given by $f_{a'}(z) \defeq \cp{\delta^\pt_z}(p \cto (\blank))$.
  Thus, applying \cref{thm:integers-univ-symm}
  shows that $\varphi_{a'}$ is an equivalence,
  where $\varphi_{a'}(h,q) \defeq h(0)$.
\end{proof}

To simplify notations, we will now use variables $X,Y:\TorZ$,
and also write $X,Y:\UU$ for the underlying types.

Let $\vec c_p(X)$ denote the center of contraction of $Q_p(X)$ for $X:\TorZ$.
Again, we write $(c_p(X),\tilde{c}_p(X),\hat{c}_p(X))\defeq\vec{c}_p(X)$
for the components, where
\begin{align*}
         c_p &: \prod_{X:\TorZ} A(X),\\
  \tilde c_p &: \prod_{X:\TorZ}\prod_{x:X} a =^A_{s^X_x} c_p(X), \\
    \hat c_p &: \prod_{X:\TorZ}\prod_{x:X} \tilde c_p(X,1+x)
               = \cp{\delta^X_x}(p \cto \tilde c_p(X,x)).
\end{align*}
In particular, $q\defeq \tilde c_p(\pt,0) : a =^A_{s^\pt_0} c_p(\pt)$,
so we can define $r\defeq \cp{(\gamma_0^{-1})^-}(q\invo) : c_p(\pt) = a$
(recall~\cref{lem:inv2-change-path}).

We now proceed to establish that $\apd{c_p}(\Zloop) =^{\tilde A}_r p$.
Again we work be elaborating $\apd{c_p}(\Zloop)$.

\begin{lemma}\label{lem:s-X-x-*-e}
  For all $X,Y:\TorZ$, $e: X=Y$ and $x:X$ we have a path
  $\varepsilon_{e,x} : (s^X_x)^{-1} \ct s^Y_{\ptoe{e}(x)} = e$,
  where $\ptoe{e}\defeq \pr_1(e)_* : X\to Y$.
\end{lemma}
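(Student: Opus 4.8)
The plan is to prove \cref{lem:s-X-x-*-e} by path induction on $e$, exactly as in the proof of the analogous \cref{lem:ap-c-tilde-c}. So first I would reduce to the case $Y \jdeq X$ and $e \jdeq \refl{X}$. Under this reduction $\ptoe{e} = \pr_1(\refl X)_*$ is the identity function on $X$, so $\ptoe{e}(x) \jdeq x$, and the goal becomes
\[
  (s^X_x)^{-1} \ct s^X_x = \refl{X}.
\]
This is just an instance of the general groupoid law $w^{-1} \ct w = \refl{}$ for paths (e.g.\ \cite[Lemma 2.1.4]{hottbook}) applied to $w \defeq s^X_x : \pt =_\TorZ X$, so I would take $\varepsilon_{\refl X, x}$ to be that law evaluated at $s^X_x$.

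The one subtlety worth spelling out — and the only place where a little care is needed — is the well-typedness of the reduced goal: after inducting on $e$ we must know that $\ptoe{\refl X}$ really is (judgmentally, or at least propositionally) the identity, so that $s^Y_{\ptoe{e}(x)}$ becomes $s^X_x$ and the two sides of the equation live in the same type $X =_\TorZ X$. Since $\ptoe{q} \defeq \pr_1(q)_*$ and transport along $\refl{}$ is the identity by definition of $\trp{}{}$ (cf.\ \cref{def:pathover}), we have $\ptoe{\refl X} \jdeq \id_X$ definitionally, so $\ptoe{\refl X}(x) \jdeq x$ and no transport juggling is needed. Thus the statement after induction typechecks on the nose and the proof closes by the inverse law for path composition.

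I do not expect any real obstacle here; this lemma is a direct dependent-path analogue of \cref{lem:ap-c-tilde-c}, and the pattern "induct on $e$, observe $\ptoe{e}$ becomes the identity, conclude by a groupoid law" is precisely the one already used there. The only thing to keep an eye on is bookkeeping: making sure the reversal $(s^X_x)^{-1}$ in the statement matches the convention for path reversal from \cite[2.1]{hottbook} so that the relevant instance is literally $w^{-1}\ct w = \refl{}$ rather than something needing an extra coherence. With that convention fixed, the proof is a two-line induction.
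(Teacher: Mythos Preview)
Your proposal is correct and matches the paper's proof essentially line for line: induct on $e$, use $\ptoe{\refl X}(x)\jdeq x$ to reduce to $(s^X_x)^{-1}\ct s^X_x=\refl X$, and close with the inverse groupoid law. The paper just names that law explicitly as $\iota(s^X_x)$ from \cref{lem:invlaw-change-path}, which is convenient for the later appeal in \cref{lem:apd-c-tilde-c}, but it is the same argument.
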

\begin{proof}
By induction on $e$ it suffices to give
$\varepsilon_{\refl{X},x} : (s^X_x)^{-1} \ct s^X_x = \refl{X}$,
since $\pr_1(\refl{X})_*(x) \jdeq x$.
Hence we set $\varepsilon_{\refl{X},x} \defeq \iota(s^X_x)$,
with $\iota$ as in \cref{lem:invlaw-change-path}.
\end{proof}

\begin{lemma}\label{lem:apd-c-tilde-c}
For all $X,Y:\TorZ$, $e: X=Y$ and $x:X$ we have
$\apd{c_p}(e) = \cp{\varepsilon_{e,x}}
(\tilde c_p(X,x)\invo\cto \tilde c_p(Y,\ptoe e(x)))$,
where $\ptoe e$ is as above.
\end{lemma}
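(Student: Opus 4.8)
The plan is to mimic the proof of \cref{lem:ap-c-tilde-c}. The goal has the shape $\prod_{X,Y:\TorZ}\prod_{e:X=Y}\prod_{x:X}(\cdots)$, so I would fix $X$ and proceed by path induction on $e$, with the motive being the $x$-indexed product. It then suffices to treat the case $Y\jdeq X$ and $e\jdeq\refl{X}$, with $x:X$ still free.

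In that case the left-hand side $\apd{c_p}(\refl{X})$ reduces, by the definition of $\apd{}$ from \cite[2.3]{hottbook}, to the reflexivity path $\refl{c_p(X)}$, viewed as an element of $c_p(X)=^A_{\refl{X}}c_p(X)\jdeq (c_p(X)=c_p(X))$. For the right-hand side, note that $\ptoe{\refl{X}}(x)\jdeq x$, so it becomes $\cp{\varepsilon_{\refl{X},x}}\bigl(\tilde c_p(X,x)\invo\cto\tilde c_p(X,x)\bigr)$; and by the definition of $\varepsilon$ recorded in the proof of \cref{lem:s-X-x-*-e}, we have $\varepsilon_{\refl{X},x}\jdeq\iota(s^X_x)$, with $\iota$ as in \cref{lem:invlaw-change-path}.

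It then remains to observe that this last expression is exactly an instance of \cref{lem:invlaw-change-path}, applied with the path over a path $\hat p\defeq\tilde c_p(X,x):a=^A_{s^X_x}c_p(X)$: that lemma yields $\cp{\iota(s^X_x)}\bigl(\tilde c_p(X,x)\invo\cto\tilde c_p(X,x)\bigr)=\refl{c_p(X)}$. Combining the two computations, both sides of the claimed identity equal $\refl{c_p(X)}$, which closes the inductive step and hence the proof.

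I do not expect a genuine obstacle here: the auxiliary lemmas — in particular the explicit shape of $\varepsilon_{e,x}$ in \cref{lem:s-X-x-*-e} and the precise statement of \cref{lem:invlaw-change-path} — have been arranged so that the base case of the path induction is literally an instance of \cref{lem:invlaw-change-path}. The only care required is routine bookkeeping: confirming that $\apd{}$, $\ptoe{}$, and $\varepsilon$ all compute as claimed once $e$ has been contracted to $\refl{X}$, and checking that the paths over paths on each side have the asserted types (the right-hand side lands in $c_p(X)=^A_{e}c_p(Y)$ precisely because $\varepsilon_{e,x}:(s^X_x)^{-1}\ct s^Y_{\ptoe{e}(x)}=e$, matching the type of $\apd{c_p}(e)$).
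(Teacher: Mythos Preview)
Your proof is correct and matches the paper's approach exactly: path induction on $e$ reduces to the reflexivity case, where the definitional equality $\varepsilon_{\refl{X},x}\jdeq\iota(s^X_x)$ from \cref{lem:s-X-x-*-e} makes the goal an instance of \cref{lem:invlaw-change-path}. The paper also records the well-typedness check you mention in your final paragraph.
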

\begin{proof}
Induction on $e$ reduces the proof to the case $e\jdeq\refl{X}: X=X$,
which follows from \cref{lem:invlaw-change-path} and \cref{lem:s-X-x-*-e}.
It remains to check that the general statement of the lemma is well-typed.
We have the following paths over paths.
\begin{align*}
\apd{c_p}(e) &: c_p(X) =^A_e c_p(Y)\\
\tilde c_p(Y,\ptoe e(x)) &:  a =^A_{s^Y_{\ptoe e(x)}} c_p(Y)\\
\tilde c_p(X,x)\invo &: c_p(X) =^A_{(s^X_x)^{-1}} a\\
\tilde c_p(X,x)\invo \cto c_p(Y,\ptoe e(x)) &:
     c_p(X) =^A_{(s^X_x)^{-1}\ct s^Y_{\ptoe e(x)}} c_p(Y)
\end{align*}
In order to make ends meet between the first and the fourth typing
we invoke \cref{lem:s-X-x-*-e} and \cref{def:pathover-change-path}.
\end{proof}

We're now ready to calculate $\apd{c_p}(\Zloop)$.
A great simplification is obtained by using that $\TorZ$ is a groupoid:
all $2$-paths having the same endpoints are equal.
Hence the functions $\cp{\alpha}$ only depend on the path-type
of $\alpha$.
Also, as $\cp{\refl{}}$ is the identity, so is any $\cp{\alpha}$
when the endpoints of $\alpha$ are definitionally equal.
We make extensive use of this simplification.

Abbreviate $s_0 \defeq s^\pt_0$, $s_1 \defeq s^\pt_1$,
then $\varepsilon_{\Zloop,0} : s_0^{-1} \ct s_1 = {\Zloop}$.
We split the latter identity in a sequence of identities:
\[
s_0^{-1} \ct s_1 \stackrel{\alpha}{=}
\refl{\pt}\ct(\Zloop\ct s_0) \stackrel{\beta}{=}
\refl{\pt}\ct(\Zloop\ct\refl{\pt})\stackrel{\gamma}{=} {\Zloop}
\]
Here $\alpha\defeq\apc((\gamma_0^{-1})^-,\delta_0^{-1})$ and
$\beta\defeq\apc(\refl{\refl{\pt}},\apc(\refl{\Zloop},\gamma_0^{-1}))$ are constructed with $\apc$ so as to
enable the application of \cref{lem:compo-change-path}.
Also, for $\gamma$ we may take $\gamma(\Zloop)$
from~\cref{lem:unitlaw-change-path}.
We now calculate, recalling that $r\jdeq\cp{(\gamma_0^{-1})^-}(q\invo)$
and $q\jdeq\tilde c_p(\pt,0)$:
\allowdisplaybreaks
\begin{align*}
  \apd{c_p}(\Zloop)
  &= \cp{\varepsilon_{\Zloop,0}}
    \bigl(\tilde c_p(\pt,0)\invo \cto \tilde c_p(\pt,1)\bigr) \\
  &= \cp{\alpha\ct\beta\ct\gamma}
    \bigl(q\invo \cto \cp{\delta_0}(p \cto q)\bigr) \\
  &= \cp{\gamma}\biggl(\cp{\beta}\Bigl(\cp{\alpha}
    \bigl(q\invo \cto \cp{\delta_0}(p \cto q)\bigr)\Bigr)\biggr) \\
  &= \cp{\gamma}\biggl(\cp{\beta}\Bigl(
    r \cto \cp{\delta_0^{-1}}\bigl(\cp{\delta_0}(p \cto q)\bigr)
    \Bigr)\biggr) \\
  &= \cp{\gamma}\Bigl(\cp{\beta}\bigl(
    r \cto (p \cto q)\bigr)\Bigr) \\
  &= \cp{\gamma}\Bigl(r \cto \bigl(p \cto \cp{\gamma_0^{-1}}(q)\bigl)\Bigr)
   = \cp{\gamma}\Bigl(r \cto \bigl(p \cto r^{-1}\bigl)\Bigr)
\end{align*}
To conclude that $\apd{c_p}(\Zloop) =^{\tilde A}_r p$, we only need
a final auxiliary lemma that describes what happens when we
transport $p$ backwards along $r$ in the family $\tilde A$.
\begin{lemma}
  For any $X : \UU$, $B : X \to \UU$, $x:X$, $b,c:B(x)$,
  $s : x=x$, $r : b =^B_{\refl{x}} c$, and $q : c =^B_s c$
  we have
  \[
    \trp{\tilde B,r^{-1}}(q) =
    \cp{\gamma(s)}(r \cto (q \cto r^{-1})),
  \]
  where $\tilde B(y) \defeq (y =^B_s y)$ for $y:B(x)$,
  and $\gamma(s)$ is as in~\cref{lem:unitlaw-change-path}.
\end{lemma}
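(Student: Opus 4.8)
The plan is to prove this by path induction on $r$. Since, by \cref{def:pathover}, the type $b =^B_{\refl{x}} c$ is definitionally the ordinary identity type $b =_{B(x)} c$, the variable $r$ is just a path and we may apply based path induction at $b$. The one wrinkle is that the element $q$ lives in a type $c =^B_s c$ that mentions $c$, so we carry $q$ along as a universally quantified variable in the motive; it then suffices to treat the case $c \jdeq b$, $r \jdeq \refl{b}$, with $q : b =^B_s b$ arbitrary. (One should first check that the displayed equation is well typed as stated, which is a short unwinding: with the right-recursive conventions for path reversal and composition one has $(\refl{x})^{-1} \jdeq \refl{x}$ and $s \ct (\refl{x})^{-1} \jdeq s$, so that $q \cto r^{-1}$ has type $c =^B_s b$, hence $r \cto (q \cto r^{-1})$ has type $b =^B_{\refl{x}\ct s} b$, and $\cp{\gamma(s)}$ lands it in $b =^B_s b$, matching the left-hand side $\trp{\tilde B,r^{-1}}(q)$.)

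In the base case the left-hand side reduces definitionally: $r^{-1} \jdeq \refl{b}$, and $\trp{\tilde B,\refl{b}}$ is the identity function on $\tilde B(b) \jdeq (b =^B_s b)$ by \cref{def:pathover}, so $\trp{\tilde B,r^{-1}}(q) \jdeq q$. For the right-hand side, $r^{-1} \jdeq \refl{b} \jdeq \reflo{b}$ viewed as a path over $(\refl{x})^{-1} \jdeq \refl{x}$, so the defining clause of $\cto$ in \cref{def:pathoveralgebra} gives $q \cto r^{-1} \jdeq q$, whence $r \cto (q \cto r^{-1}) \jdeq \reflo{b} \cto q$, a path of type $b =^B_{\refl{x}\ct s} b$. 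Applying $\cp{\gamma(s)}$ and invoking \cref{lem:unitlaw-change-path} (with $p \defeq s$ and $\hat p \defeq q$) yields $\cp{\gamma(s)}(\reflo{b} \cto q) = q$. Thus both sides equal $q$, closing the base case, and path induction then gives the lemma.

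There is no real obstacle here: apart from the single appeal to \cref{lem:unitlaw-change-path}, everything is definitional, and the only thing demanding a little attention is making the induction typecheck, i.e.\ keeping track of which reductions hold judgmentally under the chosen (right-recursive) conventions for $\ct$, $\cdot^{-1}$, $\cto$ and $\reflo{\blank}$, as indicated above.
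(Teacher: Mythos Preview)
Your proof is correct and follows exactly the route the paper takes: induction on $r$ (which is an ordinary path since $b =^B_{\refl{x}} c \jdeq b =_{B(x)} c$), after which both sides reduce to $q$ via a single application of \cref{lem:unitlaw-change-path}. Your additional remarks on well-typedness and the judgmental reductions under the right-recursive conventions simply make explicit what the paper leaves implicit.
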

The proof is by induction on $r$, followed by an appeal
to~\cref{lem:unitlaw-change-path}.

If~\cref{cond:JDE}, and not just~\cref{cond:JNE}, holds,
then we see that $c_p(\pt)\jdeq a$, and $r=\refl{a}$,
since $\vec c_P(\pt)$ reduces to
$\varphi_{a}^{-1}(a,\cp{\gamma_0}(\reflo{a}))$,
which is a triple $(a,h,q)$, where $h(0)=\cp{\gamma_0}(\reflo{a})$.

\section{Interpretation in higher toposes}
\label{sec:topos}

Voevodsky's pioneering work~\cite{1211.2851} constructed interpretations of the rules of univalent foundations (but not the entire formal system~\cite{voevodsky:not-interp}) in the Quillen model category of simplicial sets, which is a presentation of the fundamental $(\infty,1)$-topos of $\infty$-groupoids.
After a decade of further work, this interpretation has now been extended to include a class of model categories presenting all $(\infty,1)$-toposes by Shulman~\cite{shulman:univinj}, and made into an interpretation of the entire formal system by Brunerie, de Boer, Lumsdaine, and M\"{o}rtberg~\cite{initiality}.
Thus, we can now say conclusively that the result of our paper yields a theorem about all $(\infty,1)$-toposes.

However, this theorem requires a bit of unpacking to make it look familiar to higher topos theorists.
In particular, since all $(\infty,1)$-toposes are cocomplete, the more usual way to define an internal ``circle object'' in such a topos would be as a (homotopy) colimit: specifically, the coequalizer of two copies of the identity map of the terminal object (corresponding to the presentation of a circle as a cell complex with one 0-cell and one 1-cell).
The natural theorem to expect would then be that \emph{this} circle object is a classifier for $\ZZ$-torsors.

One way to obtain such a result from our theorem would be to observe that according to the interpretation of higher inductive types in higher toposes constructed by~\cite{1705.07088}, the higher inductive $\Sc$ is a presentation of the above homotopy colimit.
Since we have shown that our $\TorZ$ has the same induction principle that $\Sc$ has by definition, they must be equivalent.
Thus, since our $\TorZ$ classifies $\ZZ$-torsors by construction, so does $\Sc$ and hence so does the circle object.

However, a more direct approach is also possible, which avoids discussing higher inductive types at all:\footnote{As of this writing, higher inductive types are not yet included in the work of~\cite{initiality}.  Also the models of universes in~\cite{shulman:univinj} are not yet known to be closed under parametrized higher inductive types, although this is not a problem in our situation since $\Sc$ is not parametrized.} we can use our theorem to show that the interpretation of $\TorZ$ in an $(\infty,1)$-topos has the universal property of the homotopy colimit that defines a circle object.
It will then follow that since it classifies $\ZZ$-torsors, so does any other such homotopy colimit.

\begin{theorem}\label{thm:torz-coeq}
  In any $(\infty,1)$-topos $\sE$, there is a coequalizer diagram $1 \rightrightarrows 1 \to \TorZ$.
\end{theorem}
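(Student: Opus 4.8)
The plan is to exhibit the coequalizer diagram directly, by producing a cocone under the parallel pair $1 \rightrightarrows 1$ with apex $\TorZ$ and then verifying the mapping-space universal property of a colimit against every object of $\sE$; the conclusion is then immediate from the Yoneda lemma. Since $1$ is terminal, there is, up to homotopy, only one parallel pair $1 \rightrightarrows 1$, namely two copies of $\mathrm{id}_1$, so the content is the apex, the leg $1 \to \TorZ$, and the colimit witness. Unwinding the notion of cocone under $1 \rightrightarrows 1$ with apex $X$: such a cocone consists of two points $1 \to X$ together with two identifications between them, and contracting one such identification shows that the space of these cocones is equivalent to the free loop space $\sum_{x}(x=x)$ formed in the $\infty$-groupoid $\operatorname{Map}_{\sE}(1,X)$. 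The canonical cocone for $\TorZ$ is given by (the interpretation of) the point $\pt : 1 \to \TorZ$ together with the loop $\Zloop$.

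The next step is to transport our main result into $\sE$. By the interpretation of univalent type theory in $(\infty,1)$-toposes (\cite{shulman:univinj,initiality}), the induction principle \cref{eq:circle-induction} for $\TorZ$ holds in the internal type theory of $\sE$, relative to each of the univalent universes present in those models. By \cite[Thm.~50]{sojakova:hits-hias}, the induction principle implies the non-dependent universal property: the evaluation map \cref{eq:eval}, $(\TorZ \to X) \to \sum_{x:X} x=x$, $f \mapsto (f(\pt),\ap{f}(\Zloop))$, is an equivalence. Hence, for every object $X$ of $\sE$ — choosing a univalent universe in the model large enough to classify $X$, which exists because the models of \cite{shulman:univinj} carry univalent universes $\UU_\kappa$ for a proper class of regular cardinals $\kappa$, with each object belonging to $\UU_\kappa$ for $\kappa$ sufficiently large — the internal hom $X^{\TorZ}$ is equivalent over $1$ to the internal free loop space $\sum_{x:X}(x=x)$, via the interpretation of \cref{eq:eval}.

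It remains to externalize. The functor $\operatorname{Map}_{\sE}(1,-) : \sE \to \mathcal S$, being representable, preserves all limits; in particular it sends path objects to path objects and sends the total space of a type family to the total space of the induced family of global-point spaces, so it commutes up to equivalence with the formation of identity types and of $\sum$-types. Applying it to the equivalence of the previous step therefore produces a natural equivalence between $\operatorname{Map}_{\sE}(\TorZ,X)$ and the space of cocones under $1 \rightrightarrows 1$ with apex $X$. By soundness of the interpretation, this equivalence is, on global points, exactly precomposition with the canonical cocone $(\pt,\Zloop)$, i.e.\ the operation $g \mapsto (g\circ\pt,\ap{g}(\Zloop))$. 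By the Yoneda lemma, this exhibits $\TorZ$ together with $\pt$ and $\Zloop$ as the coequalizer of $1 \rightrightarrows 1$, which is the desired diagram.

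The expected main obstacle is bookkeeping rather than conceptual content. First, some care is needed with universe size, to be sure that the internal statement obtained from \cref{eq:circle-induction} can genuinely be instantiated at an arbitrary object $X \in \sE$ — that is, that the needed univalent universes really are available in the models of \cite{shulman:univinj}. Second, one must carefully unwind the $\infty$-categorical meaning of ``coequalizer diagram $1 \rightrightarrows 1 \to \TorZ$'': the shape of the diagram, the identification of its cocones with free loops, and the verification that the equivalence built above is precisely the one induced by the canonical cocone, so that the Yoneda lemma applies. We note that only the non-dependent universal property is used here; the full strength of the induction principle \cref{eq:circle-induction} is not required for this statement, though it would be relevant for descent-type refinements, such as the stability of this coequalizer under base change.
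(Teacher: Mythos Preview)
Your proposal is correct and follows the same two-step strategy as the paper: invoke Sojakova's theorem to pass from the induction principle for $\TorZ$ to a universal-property statement, then externalize to obtain the coequalizer property in $\sE$. The only difference is presentational: the paper uses the homotopy-initiality formulation (contractibility of $\Shom(\TorZ,\mathcal A)$) and carries out the externalization by checking fiberwise contractibility in a presenting Quillen model category, whereas you use the equivalent evaluation-map formulation \eqref{eq:eval} and externalize by applying the limit-preserving global-sections functor $\operatorname{Map}_{\sE}(1,-)$ directly to the internal equivalence $X^{\TorZ}\simeq\sum_{x:X}(x=x)$.
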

\begin{proof}
  Half of the proof takes place inside of type theory and the other half in a model category.
  However, the first half has already been done by Sojakova~\cite{sojakova:hits-hias},
  as mentioned in \cref{sec:circles}.
  Her main theorem~\cite[Theorem 50]{sojakova:hits-hias} then implies that if a circle algebra $\mathcal C\jdeq(C,c,s)$ satisfies the induction principle,
  then it is \emph{homotopy-initial} in that for any other circle algebra $\mathcal A(A,a,p)$
  the type of circle algebra homomorphisms~\eqref{eq:circle-hom}
  is contractible, i.e., there is an element of the type
  \[
    \prod_{\mathcal A:\CircleAlg} \iscontr\bigl(\Shom(\mathcal C,\mathcal A)\bigr).
  \]
It follows that our $\TorZ$ is homotopy-initial in this sense.

For the second half of the proof, suppose we have a Quillen model category $\cE$ that presents our $(\infty,1)$-topos $\sE$.
We must show that for any object $A$ of $\sE$, the diagram of hom-spaces ($\infty$-groupoids)
\[ \sE(\TorZ,A) \to \sE(1,A) \rightrightarrows \sE(1,A) \]
is a homotopy equalizer, i.e., that the map from $\sE(\TorZ,A)$ to the homotopy equalizer of two copies of the identity map of $\sE(1,A)$ is an equivalence.
It suffices to show that the homotopy fiber of this map over any point is contractible, which is to say that given any point $a:1\to A$ and homotopy $p:a\sim a$, the space of maps $f:\TorZ\to A$ equipped with a homotopy $r : h(\pt) \sim a$ and a higher homotopy $h(\Zloop) \ct r \sim r \ct p$ is contractible.

Now, homotopies in the $(\infty,1)$-category $\sE$ can always be presented by \emph{right homotopies} in the model category $\cE$, meaning maps into a path object.
Thus, our object $A$ of $\sE$ with $a$ and $p$ can be presented by an object of $\cE$, which we also denote $A$, with a point $a:1\to A$ in $\cE$ and a right homotopy $p:1\to P A_{(a,a)}$, where $P A$ is the path object of $A$ and $P A_{(a,a)}$ is its pullback along $(a,a):1\to A\times A$.
Since path objects supply the interpretation of identity types, this corresponds to a type $A$ with an element $a:A$ and path $p:a=_A a$, i.e., a circle algebra.
By a similar argument, the homotopy fiber that we want to prove contractible is equivalent to the hom-space $\sE(1, H_{\TorZ,A})$.
Thus, it suffices to observe that if $B$ is an object with a point $1\to \iscontr(B)$, then $B$ is equivalent to the terminal object (a detailed proof can be found in~\cite[Lemma 4.1]{shulman:elreedy}).
\end{proof}

We should also explain more carefully why our $\TorZ$ ``classifies $\ZZ$-torsors'' in the $(\infty,1)$-categorical sense.
Importantly, the relevant notion of ``$\ZZ$-torsor'' is the ``local'' topos- and sheaf-theoretic one: an object $X$ of a slice $(\infty,1)$-topos $\sE/A$ is a \emph{$\ZZ$-torsor} if and only there is an effective epimorphism\footnote{An \emph{effective epimorphism} is a morphism that is the quotient of its kernel.  In an $(\infty,1)$-topos the relevant ``kernels'' are simplicial objects; see~\cite[Section 6.2.3]{lurie:higher-topoi}.} $p:B \twoheadrightarrow A$ such that $p^*X$ is isomorphic over $B$ to $\ZZ\times B$.

\begin{theorem}\label{thm:torz-classif}
  For any object $A$ of an $(\infty,1)$-topos $\sE$, the hom-space $\sE(A,\TorZ)$ is naturally equivalent to the $\infty$-groupoid of $\ZZ$-torsors in $\sE/A$.
\end{theorem}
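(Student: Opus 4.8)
The plan is to reduce the statement to an internal statement in type theory and then transport it through the interpretation of \cite{initiality} in a model category of the kind constructed in \cite{shulman:univinj} presenting $\sE$. First I would unfold \cref{def:TorZ}: the type $\TorZ$ is exactly the image of the map $1\to\sum_{X:\UU}(X\to X)$ picking out $(\zet,s)$, so for any type $A$, precomposition identifies $A\to\TorZ$ with the type of triples $(\bar X,\bar f,w)$ where $\bar X:A\to\UU$, $\bar f:\prod_{a:A}(\bar X(a)\to\bar X(a))$, and $w:\prod_{a:A}\Trunc{(\zet,s)=(\bar X(a),\bar f(a))}$. Since any identification $(\zet,s)=(\bar X(a),\bar f(a))$ displays $\bar f(a)$ as conjugate to the equivalence $s$, and ``is an equivalence'' is a proposition, $w$ forces $\bar f$ to be fibrewise — hence globally — an equivalence; thus $(\bar X,\bar f,w)$ is precisely a family over $A$ carrying a fibrewise free transitive $\ZZ$-action (the powers of $\bar f$) that is fibrewise merely isomorphic to $(\zet,s)$.

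Next I would pass to $\sE$. Via the object classifier, $\bar X$ corresponds to an object $X$ of $\sE/A$ (the smallness issue is addressed below), $\bar f$ to an endomorphism of $X$, and by the previous paragraph to an automorphism, i.e.\ an action of the constant group object $\ZZ$ on $X$ over $A$. The remaining, and main, task is to match the internal condition $w$ with the sheaf-theoretic local-triviality condition from the discussion preceding \cref{thm:torz-classif}. Here I would use the standard facts that $\Trunc{\blank}$ is interpreted by $(-1)$-truncation and that a morphism is an effective epimorphism iff it is $(-1)$-connected iff its fibres are internally merely inhabited (Sections 6.2.3 and 6.5.1 of \cite{lurie:higher-topoi}). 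Concretely, form the object $\mathrm{Triv}(X)\to A$ of $\sE/A$ interpreting the family $a\mapsto\bigl((\zet,s)=(\bar X(a),\bar f(a))\bigr)$ of spaces of equivariant trivializations of the fibres. Then $w$ holds iff $\mathrm{Triv}(X)\to A$ has merely inhabited fibres, iff it is an effective epimorphism $p:B\twoheadrightarrow A$, and over $B\defeq\mathrm{Triv}(X)$ there is a tautological isomorphism $p^*X\cong\ZZ\times B$ coming from the generic trivialization. Conversely, any effective epimorphism $p:B\twoheadrightarrow A$ with an isomorphism $p^*X\cong\ZZ\times B$ induces a map $B\to\mathrm{Triv}(X)$ over $A$, through which the internal witness $\prod_{a:A}\Trunc{\mathrm{fib}_p(a)}$ (valid since $p$ is an effective epimorphism) produces $w$. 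Hence $(X,f)$ satisfies $w$ exactly when $X$ is a $\ZZ$-torsor of $\sE/A$ in the sheaf-theoretic sense; and since univalence of $\UU$ identifies $\sE(A,\UU)$ with the core of the appropriate part of $\sE/A$, this is an equivalence of $\infty$-groupoids, not just a bijection of objects.

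For the smallness bookkeeping I would argue that a $\ZZ$-torsor $X\to A$ is locally on $A$ isomorphic to $\ZZ\times(\blank)$ with $\ZZ$ a fixed small object of $\sE$; since relative $\kappa$-smallness of morphisms satisfies descent, $X\to A$ is relatively $\kappa$-small for the fixed $\kappa$ classified by $\UU$, so every $\ZZ$-torsor is classified. Naturality in $A$ I would handle by packaging the above as ``pull back the universal $\ZZ$-torsor'' $\mathcal U\to\TorZ$ — the restriction along $\TorZ\to\UU$ of the universal family, equipped with its canonical $\ZZ$-action — and observing that all ingredients (pullback of families, the construction $\mathrm{Triv}(\blank)$, image factorization, and effective epimorphisms) are stable under pullback along any $A'\to A$. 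The step I expect to be the real obstacle is the middle one: checking precisely, in the model-categorical presentation, that the internal $\Trunc{\blank}$-condition $w$ translates into the existentially quantified external condition of admitting a trivializing effective epimorphism — that is, assembling the dictionary between ``fibrewise merely trivial'' and ``locally trivial'' carefully enough, together with the size bookkeeping, that the resulting correspondence is visibly an equivalence of $\infty$-groupoids natural in $A$.
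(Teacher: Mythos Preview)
Your proposal is correct and follows essentially the same approach as the paper: unfold $\TorZ$ as a subobject of $\sum_{X:\UU}(X\to X)$, use univalence to make $\UU$ an object classifier, and translate the propositional-truncation condition into the existence of a trivializing effective epimorphism via the equivalence between ``merely inhabited fibres'' and ``$(-1)$-connected map''. The only organizational difference is that the paper packages the converse direction using the global (effective epi, mono) factorization $W\twoheadrightarrow\TorZ\rightarrowtail\sum_{X:\UU}(X\to X)$ and the orthogonality of that factorization system to produce the lift $A\to\TorZ$, whereas you work with the relative object $\mathrm{Triv}(X)\to A$ (which is precisely the pullback of $W$) and functoriality of truncation; these are the same argument in slightly different dress.
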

\begin{proof}
  Recall that $\TorZ$ is defined as $\sum_{X:\UU}\sum_{f:X\to X} \Trunc{(\zet,s)=(X,f)}$, so that it comes with a sequence of projections
  \[ \TorZ \longrightarrow \Big(\sum_{X:\UU} (X\to X)\Big) \longrightarrow \UU .\]
  We will start by characterizing what $\UU$ classifies, then $\sum_{X:\UU} (X\to X)$, then finally $\TorZ$.

For the first, the univalence axiom implies that $\UU$ is an object classifier, in the sense that $\sE(A,\UU)$ is naturally equivalent, by pullback of the canonical map $\UUp\to \UU$, to a full sub-$\infty$-groupoid of the core of $\sE/A$ (the fiberwise ``small'' objects).
This means that for any $f,g:A\to \UU$, the induced map from the space of homotopies $f\sim g$ to the space of equivalences $f^*\UUp \equiv g^*\UUp$ in $\sE/A$ is an equivalence.
This holds because the former is the space of lifts of $(f,g):A\to \UU\times \UU$ to the path-object $P\UU$, while the latter is (e.g., by~\cite[Lemma 4.3]{shulman:elreedy}) the space of sections of $\mathrm{Equiv}_A(f^*\UUp,g^*\UUp)$, or equivalently the lifts of $(f,g)$ to $\mathrm{Equiv}_{\UU\times \UU}(\pi_1^*\UUp,\pi_2^*\UUp)$, and the univalence axiom says precisely that $P\UU$ is equivalent to $\mathrm{Equiv}_{\UU\times \UU}(\pi_1^*\UUp,\pi_2^*\UUp)$ over $\UU\times\UU$.

Secondly, $\sum_{X:\UU} (X\to X)$ is the exponential in $\sE/\UU$ of $\UUp$ by itself.
Thus, by the pullback-stability and universal property of exponentials, the space of lifts of $f:A\to \UU$ to $\sum_{X:\UU} (X\to X)$ is equivalent to the space of endomorphisms of the corresponding object $f^*\UUp$ of $\sE/A$.
Hence $\sum_{X:\UU} (X\to X)$ classifies small objects equipped with an endomorphism, in that $\sE(A, \sum_{X:\UU} (X\to X))$ is naturally equivalent to the $\infty$-groupoid of small objects of $\sE/A$ with an endomorphism.

Thirdly, for $\TorZ$ we need to consider the topos-theoretic interpretation of propositional truncation.
We can ignore its particular construction in \UniMath{} and focus on its universal property, which says that it is a reflection into propositions, i.e., for any proposition $P$ we have $(\Trunc{X}\to P) \equiv (X\to P)$.
Interpreted fiberwise in an $(\infty,1)$-topos, this says that if we represent a morphism $X\to A$ as the first projection from a dependent sum, $(\sum_{a:A} X(a)) \to A$, then the corresponding projection $(\sum_{a:A} \Trunc{X(a)})\to A$ is its reflection into the sub-$(\infty,1)$-category of $\sE/A$ consisting of the monomorphisms, i.e., maps $P\to A$ whose diagonal $P\to P\times_A P$ is an equivalence.
By~\cite[Example 5.2.8.16 and Corollary 6.5.1.14]{lurie:higher-topoi} (in the case $n=-1$), the effective epimorphisms and monomorphisms in an $(\infty,1)$-topos form a factorization system, and hence in particular the map $X\to (\sum_{a:A} \Trunc{X(a)})$ is an effective epimorphism.

Now by the definition of $\TorZ$, we have an (effective epimorphism, monomorphism) factorization $W \twoheadrightarrow \TorZ \rightarrowtail \sum_{X:\UU} (X\to X)$, where $W = \sum_{(X,f)} (\zet,s)=(X,f)$ is (by arguments like those above) a classifier for small objects $X$ equipped with an endomorphism $f$ and a \emph{specified} equivalence to $\zet$ respecting the endomorphisms.
In particular, when the universal object-with-endomorphism over $\sum_{X:\UU} (X\to X)$ is pulled back to $\TorZ$, then there exists an effective epimorphism onto $\TorZ$ (namely $W\to \TorZ$) such that when it is pulled back further along that morphism it becomes equivalent to $(\zet,s)$.
Thus, this universal object is a $\ZZ$-torsor, and hence so is any pullback of it.

Conversely, suppose $(X,f)$ is a $\ZZ$-torsor in $\sE/A$, so there is an effective epimorphism $p:B\twoheadrightarrow A$ such that $p^*(X,f)$ is equivalent to $(\zet,s)$.
Since $\zet$ is a small object, this implies that $X$ has small fibers, hence is classified by some map $A\to \UU$.
Moreover, the assumption implies that when the classifying map $A\to \sum_{X:\UU} (X\to X)$ is composed with $p$, it lifts to $W$; so we have the following diagram:
\[
\begin{tikzcd}
  B \ar[dd,->>] \ar[r] & W \ar[d,->>] \\
  & \TorZ \ar[d,>->] \\
  A \ar[r] & \sum_{X:\UU} (X\to X)
\end{tikzcd}
\]
Thus, since effective epimorphisms and monomorphisms form a factorization system, there is an essentially unique lift $A\to \TorZ$.
So we have shown that an object-with-endomorphism is a $\ZZ$-torsor precisely when its classifying map $A\to \sum_{X:\UU} (X\to X)$ lifts to $\TorZ$.
Since $\TorZ \to \sum_{X:\UU} (X\to X)$ is a monomorphism, this means that $\TorZ$ classifies $\ZZ$-torsors.
\end{proof}

Combining \cref{thm:torz-coeq,thm:torz-classif}, we see that any circle object in an $(\infty,1)$-topos is equivalent to $\TorZ$ and hence classifies $\ZZ$-torsors.

\section{Conclusion and future research}
\label{sec:conclusion}

We have proved, {for any type family $A\to\TorZ$,}
the induction principle of the circle for $\TorZ$:
\[
{\ind}_A : \sum_{a:A(\pt)}(a=^A_\Zloop a) ~~\to~~ \prod_{Z:\TorZ} A(Z),
\]
with ${\ind}_A$ mapping $(a,p)$ to $c_p$ satisfying $c_p(\pt) \jdeq a$
and $\apd{c_p}(\Zloop) = p$.

It would be interesting to see whether our method can be generalised
from $\TorZ$ to the type $BG$ of $G$-torsors, where $G$ is a free group with
a set of generators, $S$, with decidable equality.
Explicitly, we expect that it is possible in our setting to
prove that $BG$ satisfies the induction principle for a higher inductive
type with a point constructor $\pt : BG$ and a path constructor
$\Zloop_{\blank} : S \to (\pt =_{BG} \pt)$,
\[
  \prod_{A: BG\to\UU}~
  \prod_{a: A(\pt)}~
  \prod_{p: \prod_{s:S} a=^A_{\Zloop_s} a}~
  \sum_{f: \prod_{z:BG} A(z)}~
  \sum_{r: f(\pt)= a}~
  \prod_{s:S}\apd{f}(\Zloop_s) =^{\tilde A_s}_r p_s,
\]
where $\tilde A_s(y) \defeq (y =^A_{\Zloop_s} y)$ for $y : A(\pt)$.

\section*{Acknowledgements}

Bezem, Buchholtz, and Grayson acknowledge the support of the Centre for Advanced Study (CAS)
at the Norwegian Academy of Science and Letters
in Oslo, Norway, which funded and hosted the research project Homotopy Type Theory and Univalent Foundations during the academic year 2018/19.
Grayson also acknowledges the support of the Air Force Office of Scientific Research, through a grant to Carnegie Mellon University.
Shulman was supported by The United States Air Force Research
    Laboratory under agreement number FA9550-15-1-0053.  The
    U.S. Government is authorized to reproduce and distribute reprints
    for Governmental purposes notwithstanding any copyright notation
    thereon.  The views and conclusions contained herein are those of
    the author and should not be interpreted as necessarily
    representing the official policies or endorsements, either
    expressed or implied, of the United States Air Force Research
    Laboratory, the U.S. Government, or Carnegie Mellon University.

\raggedright
\printbibliography

\end{document}